\newtheorem{prop}{Proposition}
\newtheorem{lemma}{Lemma}
\newtheorem{cor}{Corollary}
\newtheorem{definition}{Definition}
\newcommand{\Lrm}{\mathrm{L}}
\newcommand{\z}{\boldsymbol{\mathrm{z}}}
\newcommand{\B}{\Psi}
\DeclareMathOperator*{\argmin}{arg\,min}
\newcommand{\mathleft}{\@fleqntrue\@mathmargin0pt}
\newcommand{\x}{\boldsymbol{\tau}}
\newcommand{\db}{\boldsymbol{d}}
\newcommand{\1}{\boldsymbol{1}}
\newcommand{\w}{\boldsymbol{w}}
\newcommand{\0}{\boldsymbol{0}}
\newcommand{\zpulse}{\boldsymbol{\z^{1}}}
\newcommand{\zchase}{\boldsymbol{\z^{0}}}
\newcommand{\zd}{\boldsymbol{\z^{d}}}
\begin{document}
\begin{center}
\textbf{\Large Identifying a piecewise affine signal from its nonlinear observation - application to DNA replication analysis}\\[\baselineskip]

{Clara Lage$^1$, Nelly Pustelnik$^1$, Jean-Michel Arbona$^2$, Benjamin Audit$^1$, Rémi Gribonval$^3$}\\[\baselineskip]

{\small
$^1$ Univ Lyon, ENS de Lyon, CNRS, Laboratoire de Physique, F-69342 Lyon, France;
$^2$ Laboratoire de Biologie et Modélisation de la Cellule, ENS de Lyon, Lyon, France;
$^3$ Univ Lyon, ENS de Lyon, Inria, CNRS, UCBL, LIP UMR 5668, F-69342 Lyon, France.}
\end{center}

\begin{abstract}
DNA replication stands as one of the fundamental biological processes crucial for cellular functioning. Recent experimental developments enable the study of replication dynamics at the single-molecule level for complete genomes, facilitating a deeper understanding of its main parameters. In these new data, replication dynamics is reported by the incorporation of an exogenous chemical, whose intra-cellular concentration follows a nonlinear function. The analysis of replication traces thus gives rise to a nonlinear inverse problem, presenting a nonconvex optimization challenge. We demonstrate that under noiseless conditions, the replication dynamics can be uniquely identified by the proposed model. Computing a global solution to this optimization problem is specially challenging because of its multiple local minima.  We present the DNA-inverse optimization method that is capable of finding this global solution even in the presence of noise. Comparative analysis against state-of-the-art optimization methods highlights the superior computational efficiency of our approach. DNA-inverse enables the automatic recovery of all configurations of the replication dynamics, which was not possible with previous methods.

\end{abstract}

\section{Introduction.}
\paragraph{Context} DNA replication is the cellular process by which a cell makes an identical copy of all its chromosomes. It is a highly parallelized DNA synthesis process under strong biological regulation. Its successful completion at each cell cycle is crucial to ensure that genetic information is accurately passed on from one generation to the next. Genetic diseases can appear from replication errors in the germline while genetic instabilities associated to perturbations of the replication dynamic is a recurrent pattern in the appearance and progression of cancer \cite{Gaillard2015}. Hence, the characterization of the so-called \emph{DNA replication program} is not only of fundamental interest but also as implication on human health. 

The replication program for one cell can be described by the replication time versus chromosome position curve: $\tau(x)$ (Figure~\ref{fig:linear_piecewise_linear_rep}A). Single-molecule experimental characterization techniques (i) submit the cells to a pulse of a modified nucleotide called BrdU so that the intracellular BrdU concentration follows a time pulse $\psi(t)$ (Figure~\ref{fig:linear_piecewise_linear_rep}C) and (ii) measure a posteriori the resulting BrdU incorporation profile $z(x)$ along single DNA molecules (Figure~\ref{fig:linear_piecewise_linear_rep}B). The task of characterizing the DNA replication program thus consists in inferring $\tau(x)$ from $z(x)$ given $\psi(t)$ by solving the inverse problem $z(x)=\psi(\tau(x))$. Following previous work in the field, we assume that the rate of DNA synthesis is locally constant, i.e., that $\tau(x)$ is piecewise linear. This configurations results in a non-linear inverse problem defined over the set of function with sparse second derivative.

Nonlinear inverse problems are a field in expansion and with significant applications in imagery, optics, and tomography. \cite{Tomography, phase_pratical_2,phase_GESPAR,Valkonen_accelerationprimaldual,ML_nonlinear_measurement}. In contrast to linear inverse problems, there is no optimization method capable of providing global solutions to large classes of problems \cite{Chambolle,Kunze_inverse_problem}. On the other hand, recent works have successfully adapted methods used in linear inverse problems, such as proximal methods and ADMM, to find local solutions in a nonlinear framework \cite{ADMM_convergence_2,Valkonen_accelerationprimaldual}. The difficulty of transitioning from a local solution to a global one stems from the fact that a nonlinear measurement operator, as in the case of DNA replication, often corresponds to a nonconvex optimization problem. 
In this work, we propose to tackle a nonlinear inverse problem by an approximation that yields a mixed-integer nonlinear programming problem (MINLP). The approach of solving nonconvex optimization problems by introducing integer variables is present in recent literature, particularly in the context of nonconvex machine learning problems \cite{ML_Global,ML_Global_preprint} and to replace non-convex constraints \cite{Integer_constraints}. In our case study, the integer variable enables us to decompose a nonconvex problem into a family of convex problems.

\paragraph{State-of-the-art} 
The \emph{replication timing profile} $\tau(x)$ depends on the location and time of activation of the so-called \emph{replication origins} and the speed of the \emph{replication forks} (Figure~\ref{fig:linear_piecewise_linear_rep}A).
The experimental signals obtained by FORK-seq \cite{Forkseq,biology} captures the DNA synthesis by measuring the variation of the concentration of BrdU, a modified nucleotide that incorporates in replacement of thymidines along a fragment of chromosome, called a \emph{read} (Figure~\ref{fig:linear_piecewise_linear_rep}). Typical examples of experimental signals are illustrated in Figure~\ref{fig:reads}. The signal resulting from one fork is an atom having the shape of the BrdU time pulse $\psi(t)$ with a spatial dilatation depending on the local replication velocity (Figure~\ref{fig:reads}F). 
In previous studies \cite{biology, Forkseq,  LageGretsi, LageHybrid}, signal processing methods applied to FORK-seq data enabled to estimate the position, orientation, and speed of DNA replication forks, but only in replication fork configurations where fork atoms are sufficiently isolated (Figure~\ref{fig:reads}A,D). 
In \cite{biology}, the numerical approach involves a piecewise linear approximation of the BrdU vs.\ space signal. In \cite{LageGretsi,LageHybrid} the function $\psi$ is used as a reference atom in a dictionary composed by translation and rescaling of $\psi$ (Figure~\ref{fig:reads}F) leading to a sparse coding approach. According to \cite{LageGretsi}, the most effective numerical method for sparse coding in signals with high noise is Matching Pursuit \cite{MP}. 

While successful in accurately estimating fork speed, these approaches fail to characterize replication motifs involving truncated atoms that appears in the vicinity of replication origins and termini (Figure~\ref{fig:linear_piecewise_linear_rep}B).
Indeed, since fork progression start at origins and ends when converging forks merge (each loci is replicated once and only once), the significant particularity of the BrdU incorporation signals is that the contributions of distinct forks never overlap and add up, fork atoms being truncated at replication origins and termini (Figure~\ref{fig:linear_piecewise_linear_rep}B).
Therefore, there is a need for an alternative approach capable of robustly extracting these diverging or converging fork configurations (see Figure~\ref{fig:reads}B,E). In this context, we move away from the additive atom approach and instead focus on a method that can determine the time profile directly by specifying a nonlinear inverse problem.



\paragraph{Contribution and outline } The contribution of this article is twofold. We introduce, for the first time, a nonlinear inverse problem that accurately models the biological configuration of FORK-seq data. We thoroughly investigate the theoretical aspects of the model and demonstrate that the optimization problem yields a unique solution under specific assumptions. On the other side, we propose an original numerical method capable of globally solving this problem, a task that is recent investigated  in the existing literature \cite{phase_retrieval_optics,phase_GESPAR,ML_nonlinear_measurement}.

The article is divided as follows: In Section \ref{sec:model} we introduce the DNA-inverse model that is able to capture the main aspects of the biological configuration that results in FORK-seq data as well as the nonlinear inverse problem that provides the timing profile associated to a signal. The theoretical analysis of this problem is provided in Section \ref{sec:optm_guarantees}, where we study conditions for a unique solution. In Section \ref{sec:dna_inverse}, we reformulate the optimization problem to propose a numerical method capable of finding a global solution. The development of this numerical method and algorithm is presented in Section \ref{sec:numericalmethod}. Finally, Section \ref{sec:numresults} presents the numerical results and compares it with results obtained using a state-of-the-art method.

\section{DNA-Inverse Model}\label{sec:model}

\begin{figure}[t]
\centering
\includegraphics[width = 0.95\linewidth]{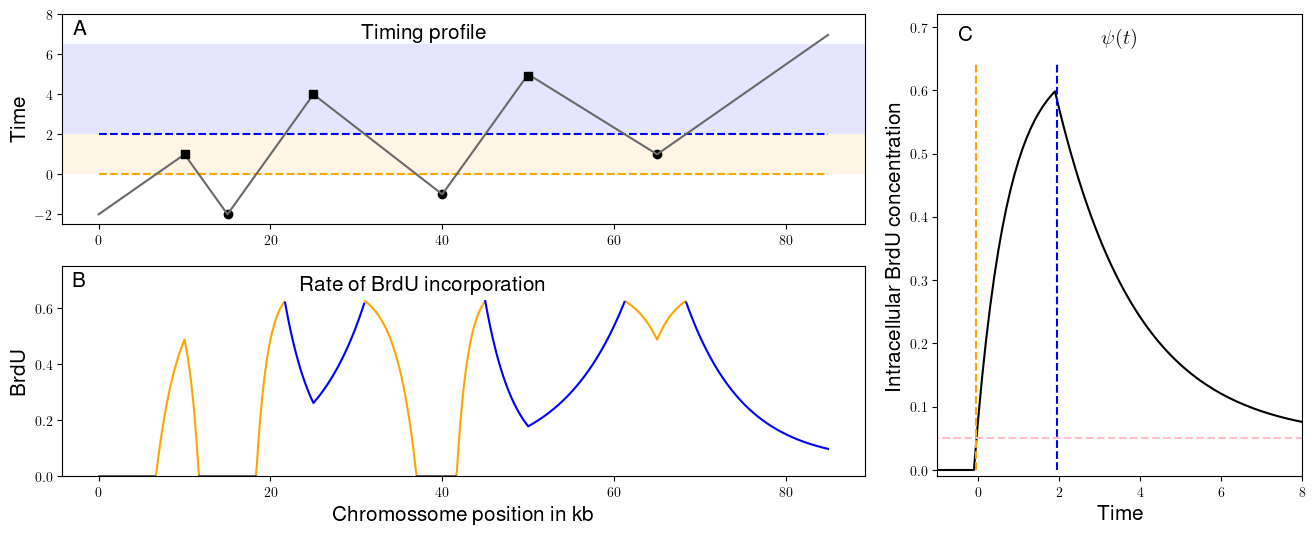}
\caption{The DNA replication program and its characterization by pulse labeling.
(A) Replication timing profile $\tau(x)$ capturing the time of replication during one replication cycle of each loci along a chromosome. DNA replication initiates at multiple sites (black dots), called \emph{replication origins}; from each site two diverging \emph{replication forks} emerge ensuring sequential DNA synthesis; replication terminates by the mergers of converging forks originating from neighboring origins at \emph{replication termini} (black squares). In this example, each replication fork have a constant speed so that the timing profile between origins and termini is linear.
(B) Profile $z(x)$ of the rate of BrdU incorporation along the newly synthesized DNA molecule during the replication program presented in (A) in the presence of the time pulse of BrdU $\psi(t)$ shown in (C); in the absence of noise, it is a simple composition $\psi$ and $\tau$: $z(x) = \psi(\tau(x))$.  
(C) Intracellular BrdU concentration along time, $\psi:\mathbb{R} \rightarrow \mathbb{R}_{+}$; the orange dashed line indicates the beginning of the external BrdU pulse resulting in the progressive increase of the intracellular BrdU concentration, the \emph{pulse phase}; the blue dashed line indicates the end time of the external pulse (dilution of the external media) resulting in the decrease of the internal BrdU concentration, the \emph{chase phase}. The pink dashed line represents the limit of $\psi$ as $t$ approaches infinity.
The background color in (A) and the curve color in (B) correspond to the pulse (orange) and chase (blue) phase, respectively.
}\label{fig:linear_piecewise_linear_rep}
\end{figure}
\paragraph{Model:} Let $\mathcal{X}$ be the discrete set of positions in a certain DNA fragment of size $n$. Then
$\mathcal{X} := \{x_{1},...,x_{n}\} \subset \mathbb{R},$  where $x_{i+1} = x_{i} + \Delta x,$ for $i \in \{1,...,n-1\},$ and $\Delta x > 0$ is fixed as a certain distance in the chromosome scale. Assuming a locally constant speed, the motion of the molecular motor can be characterized by a piecewise linear function $\tau: \mathbb{R} \rightarrow \mathbb{R},$ that assigns to each position of the DNA fragment, the time, starting from the beginning of the experiment, at which this position have been replicated. Negative values of $\tau(x_{i})$ mean that the correspondent region of the DNA fragment have been replicated before the beginning of the experiment.  When restricted to positions in $\mathcal{X},$ the function $\mathbf{\boldsymbol{\tau}}$ is associated with the vector $\boldsymbol{\tau} = (\tau_{1}, \ldots, \tau_{n}),$ where $\tau_{i} := \tau(x_{i}),$ for $i \in \{1,...,n\}$. The vector $\boldsymbol{\tau}$ is referred as a \textit{timing profile} and is illustrated in Figure \ref{fig:linear_piecewise_linear_rep}~A.

The measurement $\Psi$ is defined as the coordinatewise composition of a nonlinear function $\psi$,  which measures the concentration of BrdU in time, and the \textit{timing profile}:
\begin{align*}
  \Psi: &~\mathbb{R}^{n} ~\hspace{9mm} \longrightarrow \hspace{8mm} \mathbb{R}_{+}^{n}\\
& ~ \boldsymbol{\tau} = (\tau_{1},...,\tau_{n}) \mapsto \Psi(\boldsymbol{\tau}) = (\psi(\tau_{1}),...,\psi(\tau_{n})).
\end{align*}
Denoting $\z$ a signal provided by FORK-seq, we suppose that there exists a timing profile $\boldsymbol{\widebar{\tau}} \in \mathbb{R}^{n}$ such that:
$$\z \approx \Psi(\boldsymbol{\widebar{\tau}}),$$
and that $\boldsymbol{\widebar{\tau}}$ is in the set of piecewise linear vectors with a maximum of $C$ breakpoints. This set can be expressed using the $\ell_{0}$ pseudo-norm and a linear operator $\mathrm{L}$ that represents a discrete second derivative: $\mathrm{L}\x = \ell \ast \x,~ \text{with}~ \ell = [1,-2,1],$ where $\mathrm{L}:\mathbb{R}^{n} \rightarrow \mathbb{R}^{n-2}$ does not consider derivatives from the borders. We denote:
\begin{equation}\label{eq:PC}
\mathcal{P}_{C} := \{\x~:~\left\| \mathrm{L}\x \right\|_{0} \leq C \}
\end{equation}   
for some fixed $C \in \mathbb{R}_{+}.$ The non-negativity constraint reflects the fact that the operator $\Psi$ returns zero for negative components of $\x$.  In these conditions, a natural way to estimate $\boldsymbol{\bar{\tau}}$ is to solve the following optimization problem:
\begin{align*}\label{eq:Problem1}
\tag{\textbf{P1}} \widehat{\x} := \argmin_{\x \in \mathcal{P}_{C}} \|\z - \Psi(\x) \|_{2}^{2}, 
\end{align*}
\paragraph{Nonlinear sparse coding problem:} Problem \eqref{eq:Problem1} is a \textit{nonlinear sparse coding problem}. Nonlinear sparse coding problems appear in different application contexts such as partial differential equations, quantization and problems with large application field such as phase-retrieval \cite{Valkonen2021, 1bit-mes, sparseRec,phase_pratical_2,phase_retrieval_optics}. When $\Psi$ is a linear transform,  problem \eqref{eq:Problem1} can be relaxed and solved approximately by $\ell_{1}$ regularization. The resulting optimization problem is known as the \textit{generalized lasso} \cite{generalizedLASSO, Dual_genlasso}. In the general case, the $\ell_{1}$ regularized version of \eqref{eq:Problem1} fits the primal dual formulation for non-convex optimization and can be solved by a primal-dual proximal method or a generalized  Alternating Direction Method of Multipliers (ADMM)  for nonlinear operators $\Psi$ \cite{Valkonen2021,ADMM_convergence_nonconvex}. However, the solution proposed by these methods, as a solution to a non-convex problem, is a local solution. Additionally, these algorithms may have a high runtime depending on the difficulty in calculating the necessary proximal operators for the iterations.

\paragraph{Particularities of DNA-Inverse:} In our case study, we present some specificities of the non-linearity of operator $\Psi$. We note that the structure of the function $\Psi$ is given coordinate-wise by the function $\psi,$ that represents the BrdU concentration. Despite not being invertible, $\psi$ has at most two possible antecedents (``inverses'') for each point $b \in \psi(\mathbb{R}_{+}).$ In addition, any element of $\mathbb{R}_{-}$ is mapped to $0.$
\begin{tcolorbox}[colback=blue!5!white,colframe=blue!75!black]
\begin{equation*}\label{eq:h}
\begin{aligned}
\textbf{A.1}:~ \# \psi^{-1}(b) = \#\{ \boldsymbol{\tau} : \psi(\boldsymbol{\tau}) = b\} \leq 2,~~ \forall b \in \mathbb{R}_{+} \text { and } \psi|_{[-\infty, 0]} = 0. 
\end{aligned}
\end{equation*}
\end{tcolorbox}

The hypothesis \textbf{A.1} is employed to develop the DNA-Inverse model (Section \ref{sec:dna_inverse}). However, to ensure uniqueness in the detected timing profile, an additional hypothesis regarding the function $\psi:$ is necessary:

\begin{tcolorbox}[colback=blue!5!white,colframe=blue!75!black]
$\mathbf{\textbf{A.2}}:~ \exists~ \tau_{0} > 0$ such that $\psi_{0} := \psi|_{[0,\tau_{0}]}$ is convex and $\psi_{1} := \psi|_{[\tau_{0},\infty)}$ is concave (or the opposite), and the convexity or concavity of $\psi_{0}$ or $\psi_{1}$ is strict. In addition,  both $\psi_{0},$ $\psi_{1}$ are injective.  
\end{tcolorbox}

These properties will guide the study of problem \eqref{eq:Problem1}. The existence of at most two possible inverses for $\psi$ implies that there will be at most $2^{n}$ inverses for $\Psi$. In this work, we aim to narrow down the possibilities of inverses by investigating other important characteristics of the problem.

\begin{figure}
\centering
\includegraphics[width = 0.95\linewidth]{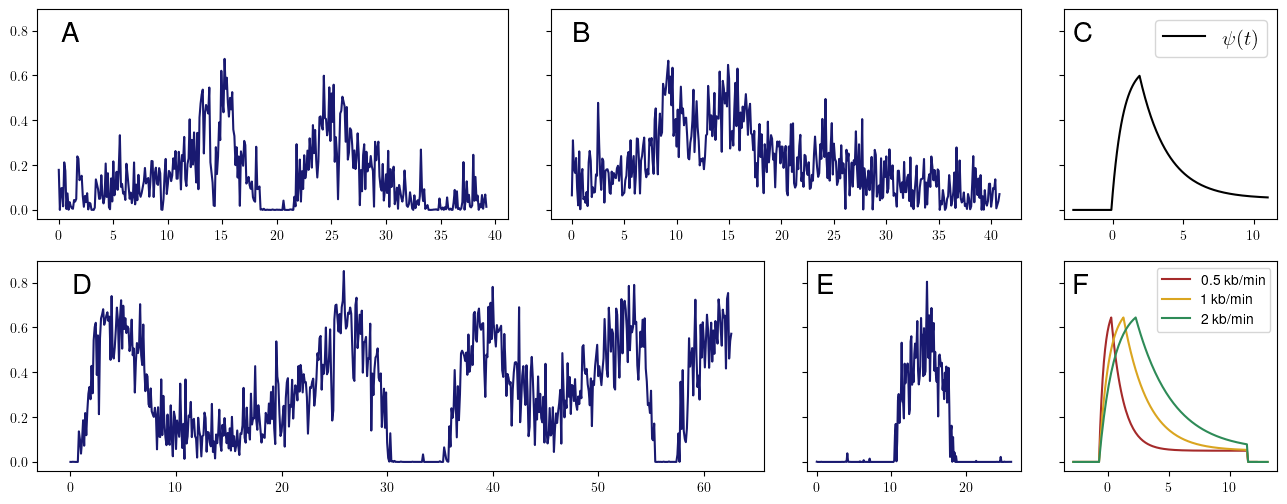}
\caption{(A,B,D,E): examples of experimental yeast FORK-seq signals for different replication configurations. C: function $\psi(t)$ which has been experimentally determined in yeast cells \cite[Section Methods]{biology}. The BrdU is injected in the medium at time $t=0$ , leading to a progressive increase of the intra-cellular concentration of BrdU, called \textit{pulse phase}. After a short period (2~min) the chemical is removed from the medium by dilution resulting in a decrease of the intra-cellular concentration until a certain residual BrdU level, called \textit{chase phase}. For signals in A and D the replication starts after the start of the BrdU injection, resulting in a pattern that reproduces the function $\psi$ with a spatial dilatation depending on the local fork speed (F). Signals in B and E capture an initiation than happened before the injection start (truncated diverging fork atoms) and a termination during the pulse phase (truncated converging fork atoms), respectively. These two configurations can not be recognized by the dictionary model \cite{LageHybrid}}\label{fig:reads}
\end{figure}

\section{Injectivity of $\Psi$}\label{sec:optm_guarantees}
In this section, we establish the uniqueness of the solution to problem \eqref{eq:Problem1} by introducing additional constraints in the available set. In other words, we prove the injectivity of $\Psi$ in the set of piecewise linear signals with extra constraints specified in this section.  These additional constraints are divided in two types: one for analyzing the simple case of linear signals, as detailed in Section \ref{section:sub:tau_linear}, and the second one in the general case of piecewise linear signals, addressed in Section \ref{section:sub:general}. The constraints will be further justified in the context of the DNA replication problem, Section \ref{section:sub:dna}.  This result allows us to demonstrate that the proposed model can uniquely define a \textit{timing profile} $\widehat{\x}$ that better fits the DNA replication signal $\z$. 

We begin by presenting a Lemma that facilitates the manipulation of the set $\mathcal{P}_{C}$ within a continuous framework. 

\begin{lemma}[Continuous form of the set $\mathcal{P}_{C}$]\label{lemma:continuous} Let $\x \in \mathcal{P}_{C}$ and  $ \mathcal{X} := \{x_{1},...,x_{n}\} \subset \mathbb{R},$ be a set such that $x_{i+1} - x_{i} = \Delta x > 0,$ for all $i \in \{1,...,n-1\}.$ Define $f_{\tau}:[x_{1},x_{n}] \rightarrow \mathbb{R}_{+},$ by $f_{\tau}(x_{i}) := \tau_{i},$ for all $i \in \{1,...,n\},$ and let $f_{\tau}$ be linear in $I_{i} := [x_{i},x_{i+1}],$ for $i \in \{1,...,n-1\}.$ Then, $f_{\tau}$ is a continuous piecewise linear function on $[x_{1},x_{n}]$ with $p \leq C$ breakpoints in $(x_{1},x_{n})$. In addition, the breakpoints of $f_{\tau}$ form a subset $\{x_{i_{1}},...,x_{i_{p}}\} \subset \mathcal{X},$ and the indices $\{i_{1},...,i_{p}\} \subset \{1,...,n-1\}$ do not depend on $\mathcal{X}.$ 
\end{lemma}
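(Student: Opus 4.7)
The plan is a direct unpacking of definitions: translate the discrete sparsity condition $\|\Lrm\x\|_0 \leq C$ into a statement about the slope changes of the piecewise linear interpolant $f_\tau$. There is no substantive analytic obstacle --- the lemma is really a dictionary between the discrete and the continuous pictures --- so I do not expect any hard step, only careful bookkeeping of indices.

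First I would verify continuity and piecewise linearity of $f_\tau$, both of which follow immediately from the construction: $f_\tau$ is affine on each $I_i = [x_i, x_{i+1}]$ by definition, and the two pieces meeting at an interior knot $x_{i+1}$ both take the value $\tau_{i+1}$, so no jump occurs and the resulting function is continuous on $[x_1,x_n]$.

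Next I would analyse when an interior knot is an actual breakpoint (i.e.\ a point where the slope genuinely changes). The slope of $f_\tau$ on $I_i$ is $s_i = (\tau_{i+1}-\tau_i)/\Delta x$, so the slope changes at $x_{i+1}$, for $i \in \{1,\ldots,n-2\}$, if and only if $s_i \neq s_{i+1}$. Multiplying through by $\Delta x > 0$ this is equivalent to $\tau_i - 2\tau_{i+1} + \tau_{i+2} \neq 0$, i.e.\ to $(\Lrm\x)_i \neq 0$. Summing over $i$, the number $p$ of breakpoints equals $\|\Lrm\x\|_0$, and the hypothesis $\x \in \mathcal{P}_C$ immediately gives $p \leq C$.

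Finally, from this characterisation I would read off the remaining claims. Each breakpoint sits at some knot $x_{i+1}$ with $i+1 \in \{2,\ldots,n-1\} \subset \{1,\ldots,n-1\}$, so the set of breakpoints is automatically a subset $\{x_{i_1},\ldots,x_{i_p}\}$ of $\mathcal{X}$. Moreover, the index set $\{i_1,\ldots,i_p\}$ is precisely $\{i+1 : (\Lrm\x)_i \neq 0\}$, which is determined only by the support of $\Lrm\x$ and hence only by $\x$; the cancellation of $\Delta x$ when passing from $s_i \neq s_{i+1}$ to $(\Lrm\x)_i \neq 0$ is what makes the indices independent of the particular choice of $\mathcal{X}$ (that is, of $x_1$ and $\Delta x$). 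This completes the plan.
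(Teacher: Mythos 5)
Your proposal is correct and follows essentially the same route as the paper: both hinge on the equivalence $(\Lrm\x)_i = 0 \iff$ no slope change at the corresponding interior knot, which the paper phrases as collinearity of three consecutive points $(x_{i-1},\tau_{i-1}),(x_i,\tau_i),(x_{i+1},\tau_{i+1})$ and you phrase as equality of consecutive slopes $s_i = s_{i+1}$ — the same computation up to an index shift. The observation that $\Delta x$ cancels, making the breakpoint indices depend only on the support of $\Lrm\x$ and not on $\mathcal{X}$, matches the paper's conclusion as well.
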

\begin{proof} Clearly $f_{\tau}$ is continuous and piecewise linear. We will verify that if $\x \in \mathcal{P}_{C},$ $f_{\tau}$ has $p = \|\mathrm{L}\x\|_{0}$ breakpoints. First, we note that there is no breakpoint in the interior of $I_{i},$ for all $i \in \{1,...,n-1\}$ because $f_{\tau}$ is linear by definition. Then, all breakpoints are contained in the set $\{x_{2},...,x_{n-1}\}.$ For $i \in \{2,...,n-1\},$ denote $p_{i} := (x_{i},\tau_{i}) \in \mathbb{R}^{2}.$ The proof is based on the following observation:
$$(\mathrm{L}\x)_{i} = 0 \Leftrightarrow p_{i-1},p_{i},p_{i+1} \text{ are colinear } \Leftrightarrow x_{i} \text{ is not a breakpoint of } f_{\tau}.$$
To see the implication $(\Rightarrow),$ note that $(\mathrm{L}\x)_{i} = 0$ means, by definition: \mbox{$\tau_{i-1} - 2\tau_{i} + \tau_{i+1} = 0,$} which implies 
\begin{equation}\label{eq:taueq}
\mbox{$\tau_{i-1} - \tau_{i} = \tau_{i} - \tau_{i+1}$}.
\end{equation}
Then define $m := \frac{\tau_{i} - \tau_{i-1}}{\Delta x},$ and $c := \tau_{i-1} - m x_{i-1}.$ It is easy to see that $p_{i-1},p_{i},p_{i+1}$ are in the graph of $y(x) = mx + c,$ and then are colinear. We conclude that $x_{i}$ can not be a breakpoint. To see $(\Leftarrow),$ let $i \in \{2,...,n-1\}$ be such that $p_{i-1},p_{i},p_{i+1}$ are colinear. Therefore, there exists $y(x) = mx + c$ such that $p_{i-1},p_{i},p_{i+1}$ are in the graph of $y.$ In this case, it is then easy to verify that \eqref{eq:taueq} holds, which means that $(\mathrm{L}\x)_{i} = 0.$ 

Thus, the set of breakpoints of $f_{\tau}$ writes $\{i_{1},...,i_{p}\} \subset \{1,...,n\},$ and $(\mathrm{L}\x)_{i_{j}} \neq 0,$ for $j \in \{1,...,p\}.$ This set does not depend on the choice of $\mathcal{X}.$ Clearly, $p = \|\mathrm{L}\x\|_{0} \leq C.$
\end{proof}

Equipped with Lemma \ref{lemma:continuous}, we can transition between the vector $\x$ and its continuous counterpart $f_{\x}.$ This lemma will be important in the proof of the main results in this section. 

To investigate the injectivity of $\Psi,$ we consider Assumption \textbf{(A.2)} and two observations: First, since $\psi_{[-\infty,0)} = 0,$ timing profiles cannot be differentiated for negative values $\tau_{i},$ i.e., before the beginning of the experiment. To obtain injectivity we must consider non-negative \textit{timing profiles} $\x.$ We also observe that $\psi$ is not injective. Particularly, there exists $t \in [0,\tau_{0})$ and  $t' \in (\tau_{0},\infty),$ such that $u := \psi(t) = \psi_{0}(t) = \psi_{1}(t') = \psi(t')$. Consequently, the constant vector $\z = \boldsymbol{u} \in \mathbb{R}^{n}$ has two different optimal constant solutions: $\tau = \boldsymbol{t} \in \mathbb{R}^{n},$ and $\tau = \boldsymbol{t'} \in \mathbb{R}^{n}$. For this reason, to obtain a unique solution, we need to restrict the solution set to non-constant vectors:
$$\mathcal{P}_{C}^{\neq} = \left\{ \boldsymbol{\tau} \in \mathcal{P}_{C} : \x \geq 0, \tau_{i} \neq \tau_{i+1} \text{ for all } i \in \{1,...,n-1\} \right\}  \subset \mathbb{R}_{+}^{n}.$$
In the next section, we show that if $\boldsymbol{\tau} \neq \boldsymbol{\tau}'$ are non-constant linear vectors in $\mathcal{P}_{0}^{\neq}$, then  $\Psi(\boldsymbol{\tau}) \neq \Psi(\boldsymbol{\tau}').$

\subsection{Injectivity of $\Psi$ when $\x$ is linear}\label{section:sub:tau_linear}

 We begin by analyzing the case $C = 0$, where the feasible set of \eqref{eq:Problem1} is the set of non-negative linear vectors.  Later, we will extend this argument to non constant piecewise linear vectors.
 

\begin{lemma}[Injectivity of $\Psi$ for $C = 0$]\label{lemma:injectivity} Assume \textbf{(A.2)}. Then, if $n \geq 6,$ the function $\B:\mathcal{P}_{0}^{\neq} \rightarrow \mathbb{R}^{n}$ is injective.
\end{lemma}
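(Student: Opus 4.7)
The plan is to argue by contradiction. Suppose $\x, \x' \in \mathcal{P}_0^{\neq}$ with $\x \neq \x'$ and $\B(\x) = \B(\x')$; I will derive a contradiction from Assumption (A.2). By Lemma~\ref{lemma:continuous}, $\x$ and $\x'$ correspond to two distinct affine functions of the index $i$, which therefore agree on at most one index. Hence the set $S^c := \{i : \tau_i \neq \tau_i'\}$ satisfies $|S^c| \geq n - 1 \geq 5$.

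For every $i \in S^c$, the equality $\psi(\tau_i) = \psi(\tau_i')$ together with $\tau_i \neq \tau_i'$, $\tau_i, \tau_i' \geq 0$, and the injectivity of $\psi_0$ and $\psi_1$ from (A.2), forces exactly one of $\tau_i, \tau_i'$ to lie in $[0, \tau_0)$ and the other in $(\tau_0, \infty)$. I would then partition $S^c = S_1 \sqcup S_2$ according to which of the two is in $[0, \tau_0)$. By pigeonhole, $\max(|S_1|, |S_2|) \geq 3$, and up to relabeling $\x \leftrightarrow \x'$ I assume $|S_1| \geq 3$, where $S_1 := \{i \in S^c : \tau_i < \tau_0 < \tau_i'\}$.

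Define $g(t) := \psi_1^{-1}(\psi_0(t))$ on the subset of $[0, \tau_0)$ where it makes sense; at every $i \in S_1$, $\tau_i' = g(\tau_i)$. Since both $\x$ and $\x'$ are affine functions of $i$ and $\x$ is non-constant, eliminating $i$ yields constants $\alpha, \beta \in \mathbb{R}$ such that $\tau_j' = \alpha + \beta \tau_j$ for every $j$. In particular, $g$ coincides with the affine map $t \mapsto \alpha + \beta t$ at the three (or more) distinct points $\{\tau_i\}_{i \in S_1} \subset [0, \tau_0)$.

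Taking any three such points $t_1 < t_2 < t_3$ and writing $t_2 = \mu t_1 + (1-\mu) t_3$ for some $\mu \in (0,1)$, the agreement of $g$ with an affine function implies $s_2 = \mu s_1 + (1-\mu) s_3$, where $s_j := g(t_j)$. Combining the convexity of $\psi_0$ on $[0, \tau_0]$ and the concavity of $\psi_1$ on $[\tau_0, \infty)$ (the opposite orientation is symmetric), with at least one of them strict per (A.2), yields $\psi_0(t_2) \leq \mu\psi_0(t_1) + (1-\mu)\psi_0(t_3) = \mu\psi_1(s_1) + (1-\mu)\psi_1(s_3) \leq \psi_1(s_2) = \psi_0(t_2)$, with strict inequality in at least one of the two middle steps, which is the contradiction. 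The main obstacle is this final Jensen-type chain: it must be packaged to cover both orientations of convex/concave in (A.2) and to exploit the fact that only one of $\psi_0, \psi_1$ is required to be strict. The threshold $n \geq 6$ enters only through the counting $|S^c| \geq 5$ needed to extract three points at which $g$ agrees with a line.
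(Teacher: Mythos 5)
Your proposal is correct and follows essentially the same route as the paper's proof: two distinct non-constant affine profiles can agree at most at one index, so at least five indices have $\tau_i,\tau'_i$ on opposite sides of $\tau_0$, the pigeonhole principle yields three indices straddling $\tau_0$ in the same orientation, and a three-point convexity/concavity (Jensen-type) argument gives the contradiction. The only differences are cosmetic — you eliminate the spatial variable to get the affine relation $\tau'_j=\alpha+\beta\tau_j$ where the paper works with the compositions $\psi_{0}(mx+c)$ and $\psi_{1}(m'x+c')$, and you track which of $\psi_0,\psi_1$ is strictly convex/concave slightly more explicitly.
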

\begin{proof}
Consider $\x,\x' \in \mathcal{P}_{0}^{\neq}$ such that $\Psi(\x) = \Psi(\x').$ We will prove that $\x=\x'$. 

Since $\x,\x'$ are linear and non-constant, by lemma \ref{lemma:continuous}, there exists $m,m',c,c' \in \mathbb{R}$, with $m,m' \neq 0$,  such that 
$\tau_{i} = mx_{i} + c,$  $\tau'_{i} = m'x_{i} + c'$ for each $x_{i} \in \mathcal{X},$ for $i \in \mathcal{I} := \{1,...,n\}$.
We begin by partitioning the set $\mathcal{I}$ into three disjoint parts:
$$\mathcal{I}_{+} = \left\{ i \in \mathcal{I} : \tau_{i}, \tau'_{i} \geq  \tau_{0} \}, ~~ \mathcal{I}_{-} = \{i \in \mathcal{I} : \boldsymbol{\tau}, \boldsymbol{\tau}' \leq \tau_{0} \right\}, ~~ \mathcal{I}_{+-} = \mathcal{I}\backslash \left( \mathcal{I}_{+} \cup \mathcal{I}_{-} \right).$$
Observe that for $ i \in \mathcal{I}_{+}$ we have $\Psi(\x)_{i} = \psi_{1}(\tau_{i})$ and similarly with $\x'$. Since 
 $\Psi(\tau) = \Psi(\tau')$ and $\psi_{1}$ is injective because of
 \textbf{(A.2)}, we deduce that 
$$\tau_{i} = \tau'_{i}, ~~ \forall i \in \mathcal{I}_{+}.$$
The same result holds for $i \in \mathcal{I}_{-},$ by the injectivity of $\psi_{0}.$
Since both $\x$ and $\x'$ are linear vectors, they are equal as soon as they coincide at two points, hence if $\# (\mathcal{I}_{+} \cup \mathcal{I}_{-}) \geq 2$ then $\tau = \tau'.$ To complete the proof, it is thus enough to verify that $\# \mathcal{I}_{+-} \leq 4$.
To establish this fact, we proceed by contradiction, and observe first that 
for each $i \in \mathcal{I}_{+-}$ we have
$$\tau_{i} < \tau_{0} < \tau'_{i}, \ \text{or} \ \tau'_{i} < \tau_{0} < \tau_{i}.$$ 
Suppose that $\mathcal{I}_{+-}$ has $5$ elements. By the pigeonhole 
 principle, at least one of the two above inequalities must be satisfied by at least $3$  elements $i \in \mathcal{I}_{+-}$ as illustrated on Figure \ref{fig:lemma}.
Without loss of generality (up to interchanging the role of $\x$ and $\x'$) we can assume that this inequality writes $\tau_{i} < \tau_{0} < \tau'_{i}$, and holds for $i \in \{i_{1},i_{2},i_{3}\} \subset \mathcal{I}_{+-}$ such that $i_{1} < i_{2} < i_{3},$ implying 
$x_{i_{1}} < x_{i_{2}} < x_{i_{3}}$. Consider $t \in (0,1)$ be such that: $x_{i_{2}} = tx_{i_{1}} + (1-t)x_{i_{3}}$  Consider the notation of Assumption \textbf{(A.2)}, where $\psi_{0}$ is strictly concave and $\psi_{1}$ is convex (the other possibilities where $\psi_{0}$ and $\psi_{1}$ exibit different combinations of convexity and concavity can be analyzed similarly). Since $\psi_{0}$ is strictly concave and $\psi_{1}$ is convex, the functions and $m,m' \neq 0$: $\varphi_{0},\varphi_{1}: [x_{i_{1}},x_{i_{3}}] \rightarrow \mathbb{R}$ defined as:
$$\varphi_{0}(x) = \psi_{0}(mx + c), \text{ and } \varphi_{1}(x) = \psi_{1}(m'x + c'),$$ 
are strictly concave and convex respectively.
Since $\Psi(\x) = \Psi(\x')$ the functions $\varphi_{0}$ and $\varphi_{1}$ coincide in three points $x_{i_{1}},x_{i_{2}}$ and $x_{i_{3}}$.
As $\varphi_{0}$ is strictly concave, we get:
$$\varphi_{1}(x_{i_{2}}) =
\varphi_{0}(x_{i_{2}}) < t \varphi_{0}(x_{i_{1}}) + (1-t) \varphi_{0}(x_{i_{3}}) = t \varphi_{1}(x_{i_{1}}) + (1-t) \varphi_{1}(x_{i_{3}}),$$
which contradicts the convexity of $\varphi_{1}$. We conclude that $\# \mathcal{I}_{+-} \leq  4$ consequently $\#(\mathcal{I}_{+} \cup \mathcal{I}_{-}) \geq 2$ and $\tau = \tau'.$ 
\end{proof}

\begin{cor}[Extension to the continuous case]\label{cor:injectivity} Let $I \subset \mathbb{R}$ be an interval. Consider two linear and non-constant functions $f_{1},f_{2}:I \rightarrow \mathbb{R}_{+},$  and the compositions $\varphi^{(1)},\varphi^{(2)}:I \rightarrow \mathbb{R},$ $\varphi^{(1)}(x) = \psi(f_{1}(x))$ and $\varphi^{(2)}(x) = \psi(f_{2}(x)).$ Let $\mathcal{X} = \{x_{{1}},x_{{2}},...,x_{{n}}\}$ be a set of equidistant points, and $n \geq 6.$ Suppose that $\varphi^{(1)}(x_{i}) = \varphi^{(2)}(x_{i}),$ for $i \in \{1,...,n\},$ then $f_{1} = f_{2}.$
\end{cor}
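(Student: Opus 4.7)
The plan is to reduce this continuous statement directly to Lemma \ref{lemma:injectivity} by sampling $f_1$ and $f_2$ at the grid $\mathcal{X}$ and verifying that the induced vectors lie in $\mathcal{P}_{0}^{\neq}$.

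First I would define $\boldsymbol{\tau}, \boldsymbol{\tau}' \in \mathbb{R}^n$ by $\tau_i := f_1(x_i)$ and $\tau'_i := f_2(x_i)$, and check the membership conditions. Since $f_1$ and $f_2$ are linear, their samples on the equidistant grid $\mathcal{X}$ satisfy the discrete linearity relation $\tau_{i-1}-2\tau_i+\tau_{i+1}=0$ for every internal index, so $\mathrm{L}\boldsymbol{\tau} = \mathrm{L}\boldsymbol{\tau}' = \boldsymbol{0}$ and hence both vectors belong to $\mathcal{P}_0$. The codomain assumption $f_1, f_2: I \to \mathbb{R}_+$ yields non-negativity, and the non-constancy of $f_1, f_2$ (together with $x_{i+1} \neq x_i$) gives $\tau_i \neq \tau_{i+1}$ and $\tau'_i \neq \tau'_{i+1}$ for all $i \in \{1,\ldots,n-1\}$. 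Thus $\boldsymbol{\tau}, \boldsymbol{\tau}' \in \mathcal{P}_0^{\neq}$.

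Next I would translate the hypothesis: the equalities $\varphi^{(1)}(x_i) = \varphi^{(2)}(x_i)$ for $i \in \{1,\ldots,n\}$ become $\psi(\tau_i) = \psi(\tau'_i)$, i.e.\ $\Psi(\boldsymbol{\tau}) = \Psi(\boldsymbol{\tau}')$. Applying Lemma \ref{lemma:injectivity} (valid since $n \geq 6$) yields $\boldsymbol{\tau} = \boldsymbol{\tau}'$, so $f_1(x_i) = f_2(x_i)$ for all $i \in \{1,\ldots,n\}$.

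Finally, two linear functions on the interval $I$ that coincide at more than one point must coincide everywhere; since $n \geq 6 \geq 2$ and the points $x_i$ are distinct, we conclude $f_1 = f_2$ on $I$. There is no real obstacle in this argument: the whole content is the verification that sampling a linear, non-constant, non-negative function on an equidistant grid produces a vector in $\mathcal{P}_0^{\neq}$, after which Lemma \ref{lemma:injectivity} applies verbatim.
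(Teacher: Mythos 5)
Your proposal is correct and follows essentially the same route as the paper: sample $f_1,f_2$ on $\mathcal{X}$, verify the resulting vectors lie in $\mathcal{P}_0^{\neq}$, translate the pointwise equality into $\Psi(\boldsymbol{\tau})=\Psi(\boldsymbol{\tau}')$, and invoke Lemma~\ref{lemma:injectivity} before concluding via agreement of two linear functions at more than one point. Your write-up is in fact slightly more careful than the paper's (which contains a typo asserting $\boldsymbol{\tau}^1=\boldsymbol{\tau}^2$ ``by assumption''), but there is no substantive difference.
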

\begin{proof} Consider $\x^{1},\x^{2} \in \mathbb{R}^{n}$ defined by: $\tau^{1}_{i} := f_{1}(x_{i})$
and $\tau^{2}_{i} = f_{2}(x_{i}).$ Then, by assumption, $\x^{1} = \x^{2}$ and $\Psi(\x^{1})_{i} = \psi(\tau^{1}_{i}) = \psi(\tau^{2}_{i}) = \Psi(\x^{2})_{i},$ for all $i \in \{1,...,n\},$ implying $\Psi(\x^{1}) = \Psi(\x^{2}).$ Since $f_{1},f_{2}$ are linear non-constant, $\x^{1},\x^{2}$ do not have constant parts, implying that $\x^{1},\x^{2} \in \mathcal{P}^{\neq}_{0}$. By Lemma \ref{lemma:injectivity}, $\x^{1} = \x^{2},$ and since  $f_{1},f_{2}$ coincide in more than two points, they define the same line and $f_{1} = f_{2}.$
\end{proof}

\subsection{The injectivity of $\Psi$ when $\x$ is piecewise linear}\label{section:sub:general}

In the case where $C > 0$, the feasible set of problem \eqref{eq:Problem1} consists of piecewise linear vectors with less than $C$ breaks. In this case, in addition to the constraint that prevents constant vectors in $\mathcal{P}_{0}^{\neq}$, we need to investigate the distance between two consecutive breaks of $\x$. The intuition of this investigation is that if arbitrarily close breaks are allowed, it is possible to oscillate around $\tau_{0}$ obtaining the same image.

\begin{definition}[Vector of breaks $\boldsymbol{i}^{\tau}$]\label{def:b} Let $\x \in \mathbb{R}^{n},$ consider the indexes $\{i_{1},...,i_{p}\}$ for $p \leq C$ defined in Lemma \ref{lemma:continuous}. Then we define the vector of breaks of $\x$ by: 
$\boldsymbol{i}^{\x} := \left(i_{0},i_{1},...,i_{p},i_{p+1}\right),$ where $i_{0} = 1$ and $i_{p+1} = n.$
\end{definition}

\begin{prop}[Injectivity of $\Psi$ for $C>0$]\label{prop:injectivity} Let $\psi$ be as in Assumption (\textbf{A.2}). Consider:
\begin{equation}\label{eq:P_bar}
\mathcal{P}_{C}^{\geq} = \left\{ \boldsymbol{\tau} \in \mathcal{P}^{\neq}_{C} : \boldsymbol{i}^{\boldsymbol{\tau}}_{k+1} - \boldsymbol{i}^{\boldsymbol{\tau}}_{k} \geq 12, \text{ for all } k \in \{1,...,p+1\}, p = |\boldsymbol{i}^{\x}| \right\},\end{equation}
where  $\boldsymbol{i}^{\boldsymbol{\tau}}$ is defined in Definition \ref{def:b}. Then $\Psi: \mathcal{P}_{C}^{\geq} \rightarrow \mathbb{R}^{n}$ is injective. 
\end{prop}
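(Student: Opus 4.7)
The plan is to reduce the piecewise-linear setting to the linear case handled by Corollary \ref{cor:injectivity} via the partition of $\{1,\ldots,n\}$ induced by the union of all breaks. Let $\boldsymbol{\tau},\boldsymbol{\tau}'\in\mathcal{P}_C^{\geq}$ satisfy $\Psi(\boldsymbol{\tau})=\Psi(\boldsymbol{\tau}')$, set $U=\boldsymbol{i}^{\boldsymbol{\tau}}\cup\boldsymbol{i}^{\boldsymbol{\tau}'}$ and order its elements as $u_0=1<u_1<\cdots<u_R=n$. Each union-subpiece $S_r:=[u_r,u_{r+1}]$ contains no interior break of either vector, so both $\boldsymbol{\tau}$ and $\boldsymbol{\tau}'$ restrict to a linear function on $S_r$, and they remain non-constant on $S_r$ thanks to the $\mathcal{P}_C^{\neq}$ condition $\tau_i\neq\tau_{i+1}$. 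I will call $S_r$ \emph{long} when it has at least six points (i.e.\ $u_{r+1}-u_r\geq 5$) and \emph{short} otherwise, and treat the two types separately.

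On every long subpiece, Corollary \ref{cor:injectivity} applied to the linear restrictions of $\boldsymbol{\tau}$ and $\boldsymbol{\tau}'$ immediately yields $\tau_i=\tau'_i$ for all $i\in S_r$, since these restrictions are linear non-constant and agree under $\psi$ at at least six equidistant samples.

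The crucial step is then to show that every short $S_r$ has long neighbours on both sides. The gap constraint of $12$ on $\boldsymbol{i}^{\boldsymbol{\tau}}$ and on $\boldsymbol{i}^{\boldsymbol{\tau}'}$ forbids both $\{u_r,u_{r+1}\}\subseteq\boldsymbol{i}^{\boldsymbol{\tau}}$ and $\{u_r,u_{r+1}\}\subseteq\boldsymbol{i}^{\boldsymbol{\tau}'}$, so up to swapping the roles of $\boldsymbol{\tau}$ and $\boldsymbol{\tau}'$ one has $u_r\in\boldsymbol{i}^{\boldsymbol{\tau}}\setminus\boldsymbol{i}^{\boldsymbol{\tau}'}$ and $u_{r+1}\in\boldsymbol{i}^{\boldsymbol{\tau}'}\setminus\boldsymbol{i}^{\boldsymbol{\tau}}$. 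A short case-analysis on the type of the next union-break $u_{r+2}$ (either it is a $\boldsymbol{\tau}'$-break, at distance at least $12$ from $u_{r+1}$, or it is a $\boldsymbol{\tau}$-break, at distance at least $12$ from $u_r$) then gives
\[ u_{r+2}-u_{r+1}\;\geq\;\min\bigl(12,\;12-(u_{r+1}-u_r)\bigr)\;\geq\;8, \]
so $S_{r+1}$ contains at least nine points and is long; the symmetric argument handles $S_{r-1}$. The endpoint subpieces $S_0$ and $S_{R-1}$ are automatically long because $1,n\in\boldsymbol{i}^{\boldsymbol{\tau}}\cap\boldsymbol{i}^{\boldsymbol{\tau}'}$ forces the first and last gaps to be at least $12$, so short subpieces never touch the boundary.

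To conclude, the endpoints $u_r,u_{r+1}$ of any short $S_r$ are shared with adjacent long subpieces on which we already have $\boldsymbol{\tau}=\boldsymbol{\tau}'$, so $\tau_{u_r}=\tau'_{u_r}$ and $\tau_{u_{r+1}}=\tau'_{u_{r+1}}$; the linear restrictions of $\boldsymbol{\tau}$ and $\boldsymbol{\tau}'$ to $S_r$ thus coincide at two distinct points and hence on all of $S_r$. Taking the union over $r$ gives $\boldsymbol{\tau}=\boldsymbol{\tau}'$. I expect the gap-based case analysis in the third paragraph to be the main obstacle, since it is the place where the specific threshold $12$ in the definition of $\mathcal{P}_C^{\geq}$ is used in an essential way and where one must also exclude two consecutive short subpieces.
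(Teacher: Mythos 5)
Your proof is correct and follows essentially the same route as the paper's: partition $[x_1,x_n]$ by the union of the breakpoints of $\boldsymbol{\tau}$ and $\boldsymbol{\tau}'$, apply Corollary \ref{cor:injectivity} on subpieces with at least six samples, use the $12$-point gap constraint to show short subpieces are isolated (and never touch the boundary), and finish on each short subpiece by matching the two linear restrictions at its endpoints. Your explicit case analysis showing $u_{r+2}-u_{r+1}\geq 8$ is in fact a more detailed version of the paper's brief "two breakpoints would accumulate within fewer than $12$ points" argument, so there is no gap.
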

\begin{proof} 
Consider $\x,\x' \in \mathcal{P}_{C}^{\geq},$ and a set of equidistant points $\mathcal{X} = \{x_{1},...,x_{n}\}.$ Then by Lemma \ref{lemma:continuous} there exists piecewise linear functions $f_{\x},f_{\x'}:[x_{1},x_{n}] \rightarrow \mathbb{R}_{+},$  where $f_{\x}(x_{i}) = \tau_{i},$ and $f_{\x'}(x_{i}) = \tau'_{i},$  for all $i \in \{1,...,n\}.$  To demonstrate that $\x = \x',$ it is sufficient to show that $f_{\x} = f_{\x'}$  

By Lemma \ref{lemma:continuous}, breakpoints of $f_{\x}$ and $f_{\x'}$ are given by $\mathcal{X}^{\x} := \{x_{i_{2}},...,x_{i_{p}}\}$ and $\mathcal{X}^{\x'} := \{x_{i'_{2}},...,x_{i'_{p'}}\}$ for $p,p' \leq C.$ Consider the vector $\boldsymbol{b},$ which aggregates and sorts all breakpoints in $\mathcal{X}^{\x} \cup \mathcal{X}^{\x'}.$ We divide the interval $[x_{1},x_{n}]$ into  intervals $\mathcal{B}_{l} = [b_{l},b_{l+1}],$ for $l \in \{1,...,L-1\}.$  Note that $f_{\x}|_{\mathcal{B}_{l}}$ and $f_{\x'}|_{\mathcal{B}_{l}}$ are both linear by the definition of $\boldsymbol{b}$. Consider $\mathcal{I}_{l} := \mathcal{B}_{l} \cap \mathcal{X}.$ Then, by Corollary \ref{cor:injectivity}, for all $l \in \{1,...,L\}$:
\begin{equation}\label{eq:easycase} 
\# \mathcal{I}_{l} \geq 6 \Rightarrow f_{\x}|_{\mathcal{B}_{l}} = f_{\x'}|_{\mathcal{B}_{l}},
\end{equation}
Note that, according to the structure of $\mathcal{P}_{C}^{\geq},$ since $\boldsymbol{i}^{\x},\boldsymbol{i}^{\x'}$ contains the borders $\{1,n\}$, the first and last breakpoints of $f_{\boldsymbol{\tau}}$ and $f_{\boldsymbol{\tau}'}$ can not appear before 12 points, implying that $\#\mathcal{I}_{1},\#\mathcal{I}_{L} > 6.$
We proceed by investigating the case $\# \mathcal{I}_{l} < 6$. Define:
$$\mathcal{L} = \left\{ l \in \{2,...,L-1\} : \#\mathcal{I}_{l} < 6 \right\}.$$
The objective is to show that for all $l \in \mathcal{L},$ $f_{\x}|_{\mathcal{B}_{l}} = f_{\x'}|_{\mathcal{B}_{l}}$. We begin by proving that $l \in \mathcal{L}$ implies $l-1 \notin \mathcal{L}$ and $l+1 \notin \mathcal{L}$ and we proceed by contradiction. By definition of $\mathcal{I}_{l},$ a breakpoint for  either $f_{\x}$ or $f_{\x'}$ occurs in $l$ and in $l+1.$ If $l+1 \in \mathcal{L},$ an additional breakpoint emerges in $l+2.$ Consequently, $f_{\x}$ or $f_{\x'}$ would accumulate two breakpoints in an interval of length less then $12$ which means that $\tau$ or $\tau'$ is not in the set $\mathcal{P}_{C}^{\geq}$. The same argument applies for the adjacent interval $\mathcal{I}_{l-1}.$ We conclude that $l-1$ and $l+1$ are not in $\mathcal{L}.$  To complete the proof we demonstrate that:
$$\text{If } l-1,l+1 \notin \mathcal{L}, \text{ then } f_{\x}|_{\mathcal{B}_{l}} = f_{\x'}|_{\mathcal{B}_{l}}.$$
 Note that adjacent intervals share boundaries, thus for every $l \in \{1,...,L-1\},$ $\boldsymbol{b}_{l+1} \in \mathcal{B}_{l} \cap \mathcal{B}_{l+1}$ and  $\boldsymbol{b}_{l} \in \mathcal{B}_{l} \cap \mathcal{B}_{l-1}.$ If $l-1,l+1 \notin \mathcal{L},$ applying \eqref{eq:easycase}, we have that: $f_{\x}|_{\{\boldsymbol{b}_{l},\boldsymbol{b}_{l+1}\}} = f_{\x'}|_{\{\boldsymbol{b}_{l},\boldsymbol{b}_{l+1}\}}.$ Since $f_{\x}|_{\mathcal{B}_{l}}$ and $f_{\x'}|_{\mathcal{B}_{l}}$ are linear and coincide in two points, we conclude that $f_{\x}|_{\mathcal{B}_{l}} = f_{\x'}|_{\mathcal{B}_{l}}.$

\end{proof}

\begin{figure}[t]
\centering
\includegraphics[width = 1\linewidth]{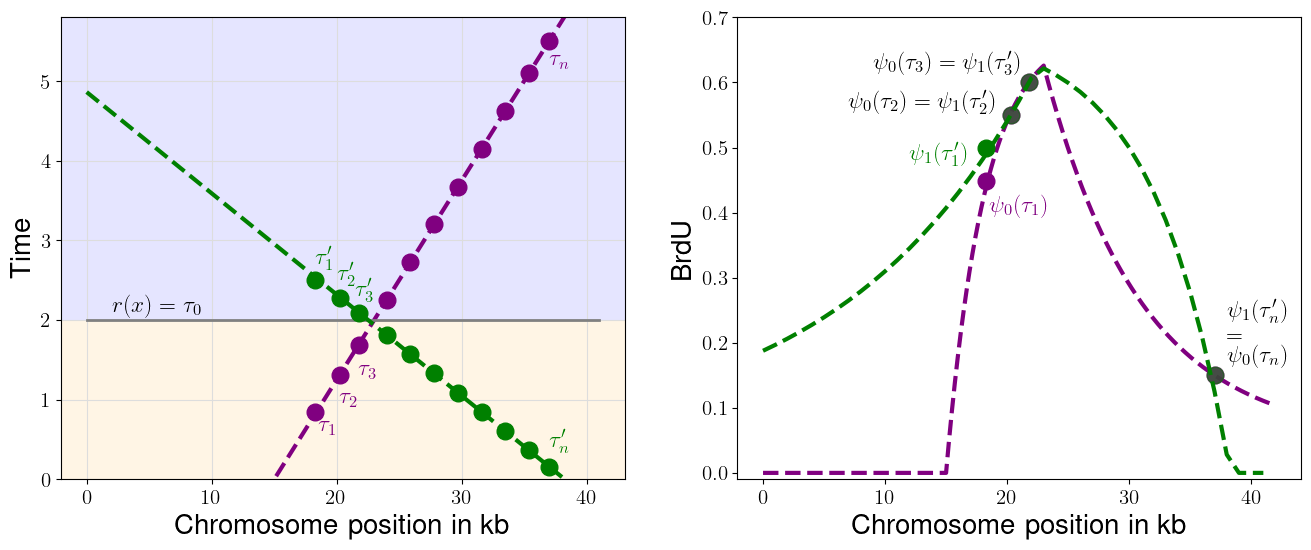}
\caption{Illustration of the injectivity of $\Psi$ for $C = 0$. (left) Purple dots represent the vector $\x = (\tau_{1},\tau_{2},\tau_{3},...,\tau_{n})$ and green dots $\x' = (\tau'_{1},\tau'_{2},\tau'_{3},....,\tau'_{n})$. Note that $\x,\x' \in \mathcal{P}_{0}.$ The dashed lines represent the linear functions that support vectors $\x,\x'$ as stated in Lemma \ref{lemma:continuous}. We note that $\psi$ is injective for vector components in the blue and orange regions. The main challenge in verifying $\Psi$ injectivity is then to consider vectors $\x,\x'$ with components in different sides of the line $r(x) = \tau_{0},$ as those in the figure. (right) Dashed lines represent the images by $\Psi$ of the lines that support vectors $\x$ (purple) and $\x'$ (green). The images  coincide at only three coordinates: ${1},{2}$ and ${n}$. However, $\x$ and $\x'$ have different images. For example, in the case of component ${3}:$ $\Psi(\x)_{3} = \psi_{0}(\tau_{3}) \neq  \psi_{1}(\tau_{3}) = \Psi(\x')_{3}$. This case illustrates the proof of Lemma \ref{lemma:injectivity}. In this lemma, we show that the set $\mathcal{I}_{+-},$ that consists on indexes for which components of  $\tau,\tau'$ are in different sides of the line $r(x) = \tau_{0},$ can not have three elements with coincident images.
}\label{fig:lemma}
\end{figure}

\subsection{Constraints in the set $\mathcal{P}_{C}$ and the DNA replication context}\label{section:sub:dna} 
In this section, we discuss how the constraints of the set $\mathcal{P}_{C}$ of Proposition \ref{prop:injectivity} can be interpreted from the perspective of their application to the DNA analysis problem. In the context of DNA replication analysis, constant parts in vector $\boldsymbol{\tau}$  means that an interval of the DNA fragment $\mathcal{X}$ was replicated simultaneously, which is not in the physical hypothesis of the real problem. On the other hand, the spacing between break-points in the context of the application corresponds to the distance between two initiation to termination events. If these two events are very close, it is not possible for biologists to extract information from the BrdU signal, which makes this hypothesis reasonable. The spacing suggested by Proposition \ref{prop:injectivity} is 12 space units, which corresponds to 1.2 kb in the actual signal. The non negativeness proposed in \eqref{eq:PC} also arises from the applied intuition to the problem. 

\section{DNA-inverse optimization}\label{sec:dna_inverse}

 Existing numerical methods for nonlinear inverse problems  provide local solutions to problem \eqref{eq:Problem1} \cite{Valkonen_accelerationprimaldual,ADMM_convergence_nonconvex}. However this appraoch faces two major limitations: (i) They are not able to provide a global solution, which is a major challenge in the DNA-inverse model; and (ii) They are not fast enough to allow the exploration of different starting points. In this section, we reformulate problem \eqref{eq:Problem1} in order to provide a numerical method able to achieve global solutions .

Employing the notation of Assumption $\textbf{(A.2)},$ the inverse set $\psi^{-1}(b),$ for any $b \in \mathbb{R}_{+},$ can be written as:
\begin{equation}\label{eq:defBinv}
\psi^{-1}(b)  = \psi_{0}^{-1}(b) \cup \psi_{1}^{-1}(b), 
\end{equation}
where $\psi_{0}^{-1}(b)$ and $\psi_{1}^{-1}(b)$  are inverse sets of $\psi_{0}$ and $\psi_{1}$ respectively. Because of $(\textbf{A.2}),$ these sets consist in single elements or are empty. Consider a signal $\z \in \mathbb{R}^{n}.$ Each $\db \in \{0,1\}^{n}$ is associated to a part of the inverse set $\Psi^{-1}(\z),$ given by:
\begin{equation}\label{eq:Bdinv}
 \Psi^{-1}(\z; \db) := \psi^{-1}_{d_{0}}(\mathrm{z}_{0}) \times ...\times \psi^{-1}_{d_{n}}(\mathrm{z}_{n}) \subset \mathbb{R}^{n},   
\end{equation}
where some of the inverse sets on the right hand side can be empty. The set $\mathcal{K}(\z) \subset \{0,1\}^{n}$ select elements for which all inverse sets are composed by a single element:

\begin{equation}\label{eq:def:K}
\mathcal{K}(\z) = \left\{ \db  : \psi_{d_{i}}^{-1}(\mathrm{z}_{i}) \neq \varnothing  \right\} \subset \{0,1\}^{n}.
\end{equation}
For elements $\db \in \mathcal{K},$ $\Psi^{-1}(\z;\db)$ can be identified as a vector in $\mathbb{R}^{n}.$ Using this abuse of notation, we denote $\Psi^{-1}_{\db}(\z)$ a vector in $\mathbb{R}^{n}$ when $\db \in \mathcal{K}:$ 
\begin{equation}\label{eq:Psiinverse_in_K}
\db = (d_{0},...,d_{n}) \in \mathcal{K}(\mathrm{z}) \Rightarrow \Psi^{-1}_{\db}(\z) := (\psi^{-1}_{d_{1}}(\mathrm{z}_{1}),...,\psi^{-1}_{d_{n}}(\mathrm{z}_{n})) \in \mathbb{R}^{n}.
\end{equation}
In this case, it is possible to develop a \textit{Taylor expansion} of the function $\psi,$ which is also a coordinatewise expansion of the function $\Psi.$*

\paragraph{Taylor expansion} Consider a signal $\z \in \mathbb{R}^{n},$ and any coordinate $i \in \{1,...,n\}.$ Let \mbox{$\db \in \mathcal{K}(\z) \subset \{0,1\}^{n},$} and $\x \in \mathbb{R}^{n}_{+}.$ Note that for all $i \in \{1,...,n\}:$ $z_{i} = \psi_{d_{i}}(\psi_{d_{i}}^{-1}(z_{i})) = \psi(\psi_{d_{i}}^{-1}(z_{i})).$ Then, if $\psi$ is continuously differentiable, by Taylor's theorem applied in the point $\psi_{d_{i}}^{-1}(\z_{i}),$ for all $i \in \{1,...,n\},$ we have:
\begin{equation}\label{eq:taylor}
  \mathrm{z}_{i}  -  \psi(\tau_{i}) = \psi^{'}(\psi^{-1}_{d_{i}}(\mathrm{z}_{i}))\left( \psi^{-1}_{d_{i}}(\mathrm{z}_{i}) - \tau_{i} \right) + h(\tau_{i})\left( \psi^{-1}_{d_{i}}(\mathrm{z}_{i}) - \tau_{i} \right)
\end{equation}
where $\lim_{\tau_{i} \mapsto \psi^{-1}_{d_{i}}(\mathrm{z}_{i})} h(\tau_{i}) = 0.$
Define: $w_{\db,i} := \psi^{'}(\psi^{-1}_{d_{i}}(\mathrm{z}_{i})) \in \mathbb{R},$ for all $i \in \{1,...,n\}.$ Then, if $|\psi^{-1}_{d_{i}}(\mathrm{z}_{i}) - \tau_{i}|$ is small for all $i \in \{1,...,n\}:$
\begin{equation}\label{eq:norm}
    \|\z - \Psi(\x)\|^{2}_{2} \approx \displaystyle\sum_{i=1}^{n} w^{2}_{\db,i}\left(\Psi_{\db}^{-1}(\z) - \x \right)_{i}^{2}.
\end{equation}   
The right hand side of \eqref{eq:norm} is a distance between $\Psi^{-1}_{\db}(\z)$ and $\tau,$ that depends on $\db.$ The variable $\db$ adjusts this distance to  the local behavior of $\Psi.$  To formalize this intuition, we define a pseudo-norm that coincide with this notion of distance for $\db \in \mathcal{K}(\z).$

\begin{definition}[weighted norm] Given a vector $\w \in \mathbb{R}^{n}$ and any $\boldsymbol{v} \in \mathbb{R}^{n},$ the weighted norm $\|\boldsymbol{v}\|_{\w}$ with respect to the vector $\w$ is defined as:
$$\|\boldsymbol{v}\|_{\w} := \sqrt{\sum_{i=1}^{n}w_{i}^{2}v_{i}^{2}}.$$
\end{definition}

\begin{definition}[$\|\cdot\|_{\w_{\db}}$]\label{def:defw} Let $\z \in \mathbb{R}^{n}_{+},$ and $\db \in \{0,1\}^{n}.$ For any $\boldsymbol{v} \in \mathbb{R}^{n},$ we define: $\| \boldsymbol{v} \|_{\w_{\db}},$ the weighted norm with respect to the vector $\w_{\db},$
where:
\begin{equation}\label{eq:defw}
w_{\db,i} := d_{i} \odot w_{\0,i} + (1 - d_{i})\odot w_{\1,i}, \text{ for all } i \in \{1,...,n\},
\end{equation}
and
\begin{minipage}[t]{0.48\linewidth}
\begin{equation*}
 w_{\0,i} :=    
\begin{cases} 
 \psi^{'}(\psi_{0}^{-1}(\mathrm{z}_{i}))  & \psi^{-1}_{0}(\mathrm{z}_{i}) \neq \varnothing \\
 0 & \psi^{-1}_{0}(\mathrm{z}_{i}) = \varnothing,
 \end{cases}
  \end{equation*}
\end{minipage}%
\hfill%
\begin{minipage}[t]{0.48\linewidth}
\begin{equation*}
 w_{\1,i} :=    
\begin{cases} 
 \psi^{'}(\psi_{1}^{-1}(\mathrm{z}_{i}))  & \psi^{-1}_{1}(\mathrm{z}_{i}) \neq \varnothing \\
 0 & \psi^{-1}_{1}(\mathrm{z}_{i}) = \varnothing.
 \end{cases}
  \end{equation*}
\end{minipage}

\end{definition}
The definition \ref{def:defw} is illustrated in Figure \ref{fig:weights}. Note that when $\db \in \mathcal{K}(\z),$ the definition of $\w_{\db}$ coincides with the weights of the Taylor approximation \eqref{eq:norm}. In Section \ref{section:sub:noisy}, we discuss this definition to other values of $\db \in \{0,1\}^{n}.$

\subsection{Noiseless signal}
The Taylor expansion suggests that Problem \eqref{eq:Problem1}, defined using the Euclidean metric in $\mathbb{R}^{n}$, can be approximated by a problem that employs an alternative notion of distance given by $\|\cdot\|_{\w_{\db}}$.
\paragraph{Alternative optimization problem in the noiseless case:} Consider $\z \in \mathbb{R}_{+}^{n},$ and the following optimization problem: 
\begin{equation}\label{eq:P2noiseless}
\tag{\textbf{P2}}
  (\x^{*},\db^{*}) := \argmin_{\{(\x,\db)  \in  \mathcal{P}_{C} \times \mathcal{K}(\z)\}} \|\x - \Psi_{\db}^{-1}(\z)\|^{2}_{w_{\db}},   
\end{equation}
where $\|\cdot\|_{w_{\db}}$  is defined in Definition \ref{def:defw} and $\Psi^{-1}_{\db}(\z)$ is defined as in \eqref{eq:Psiinverse_in_K}. 

In Figure \ref{fig:diagram_proof}, we observe possible different positions of the groundtruth timing profile $\bar{\x},$ and the  solutions $(\x^{*},d^{*})$  of \eqref{eq:P2noiseless}  and $\widehat{\x}$ of \eqref{eq:Problem1}. 

\begin{prop}[Noiseless case]
Given any $\bar{\x} \in \mathcal{P}_{C}^{\geq}$ defined in \eqref{eq:P_bar}, consider $\z = \Psi(\bar{\x}).$  Then \eqref{eq:Problem1} and \eqref{eq:P2noiseless} both  admit the same unique solution in  $\mathcal{P}_{C}^{\geq},$ which is precisely $\x.$
\end{prop}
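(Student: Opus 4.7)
My plan is to reduce both uniqueness statements to the injectivity of $\Psi$ on $\mathcal{P}_C^{\geq}$ already established in Proposition~\ref{prop:injectivity}. Since both objectives are non-negative, it suffices to exhibit $\bar\x$ (together with an auxiliary index vector $\bar\db$ for \eqref{eq:P2noiseless}) as a minimizer attaining the value $0$, and then to rule out any other element of $\mathcal{P}_C^{\geq}$ attaining the same value.

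For \eqref{eq:Problem1} the first step is immediate: $\bar\x \in \mathcal{P}_C^{\geq} \subseteq \mathcal{P}_C$ and $\z = \Psi(\bar\x)$ yield $\|\z - \Psi(\bar\x)\|_2^2 = 0$, so $\bar\x$ is a minimizer. Any competing minimizer $\x \in \mathcal{P}_C^{\geq}$ must then also satisfy $\Psi(\x) = \z = \Psi(\bar\x)$, and Proposition~\ref{prop:injectivity} forces $\x = \bar\x$.

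For \eqref{eq:P2noiseless} I would construct $\bar\db$ branch-by-branch, setting $\bar d_i = 0$ when $\bar\tau_i \in [0,\tau_0]$ and $\bar d_i = 1$ otherwise (at the overlap $\bar\tau_i = \tau_0$ either choice works). Because $z_i = \psi(\bar\tau_i)$ and the two branches $\psi_0,\psi_1$ are injective under Assumption~\textbf{(A.2)}, we obtain $\psi_{\bar d_i}^{-1}(z_i) = \bar\tau_i$ for every $i$; consequently $\bar\db \in \mathcal{K}(\z)$, $\Psi^{-1}_{\bar\db}(\z) = \bar\x$, and the weighted norm in \eqref{eq:P2noiseless} vanishes, so $(\bar\x,\bar\db)$ attains the minimum value $0$.

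For the uniqueness step, suppose $(\x^*,\db^*) \in \mathcal{P}_C^{\geq} \times \mathcal{K}(\z)$ also attains $0$. Vanishing of $\|\x^* - \Psi^{-1}_{\db^*}(\z)\|_{\w_{\db^*}}^2$ forces each term $w_{\db^*,i}^2(\tau^*_i - \psi_{d^*_i}^{-1}(z_i))^2$ to vanish; at any index with $w_{\db^*,i} \neq 0$ this gives $\tau^*_i = \psi_{d^*_i}^{-1}(z_i)$ and hence $\psi(\tau^*_i) = z_i = \psi(\bar\tau_i)$. If this holds for every $i$, then $\Psi(\x^*) = \Psi(\bar\x)$ and Proposition~\ref{prop:injectivity} concludes $\x^* = \bar\x$. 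I expect the delicate point to be the indices where $w_{\db^*,i} = 0$, which by Definition~\ref{def:defw} correspond to critical points of $\psi$ (typically the transition point $\tau_0$ under Assumption~\textbf{(A.2)}): there the weighted norm contains no information about $\tau^*_i$, so the argument must exploit the rigidity of $\mathcal{P}_C^{\geq}$, whose pieces are linear with breakpoints separated by at least $12$ indices, to pin down $\tau^*_i$ from neighbouring indices where the weights do not degenerate. A short contradiction argument using this spacing should then allow one final application of Proposition~\ref{prop:injectivity} to conclude $\x^* = \bar\x$.
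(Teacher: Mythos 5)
Your proposal follows essentially the same route as the paper: exhibit $\bar{\x}$, together with $\bar{\db}$ defined by $\bar d_i=0$ iff $\bar\tau_i\le\tau_0$, as a zero-value minimizer of both \eqref{eq:Problem1} and \eqref{eq:P2noiseless}, and then invoke the injectivity of $\Psi$ on $\mathcal{P}_C^{\geq}$ (Proposition~\ref{prop:injectivity}) for uniqueness. The zero-weight subtlety you flag at the end is genuine but is not addressed in the paper either (its proof passes directly from injectivity to uniqueness), so your sketch is, if anything, slightly more careful than the published argument.
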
 
\begin{proof}
Clearly $\widehat{\x} = \bar{\x}$ is a solution for \textbf{(P1)}. On the other hand, consider $\bar{\db} \in \{0,1\}^{n}$ defined as: $\bar{d}_{i} = 0,$ if $\bar{\tau}_{i} \leq \tau_{0}$ and $\bar{d}_{i} = 1,$ if $\bar{\tau}_{i} > \tau_{0},$ for all $i \in \{1,...,n\}.$ Then $\bar{\x} = \Psi_{\bar{\db}}^{-1}(\z),$  and $\db \in \mathcal{K}(\z),$ implying that $(\bar{\x},\bar{\db})$ is a solution for problem \eqref{eq:P2noiseless}. Because of Proposition \ref{prop:injectivity}, if $\bar{\x} \in \mathcal{P}_{C}^{\geq},$ this solution is unique on this set. 
\end{proof}

\begin{figure}[t]
\centering
\includegraphics[width = 0.9\linewidth]{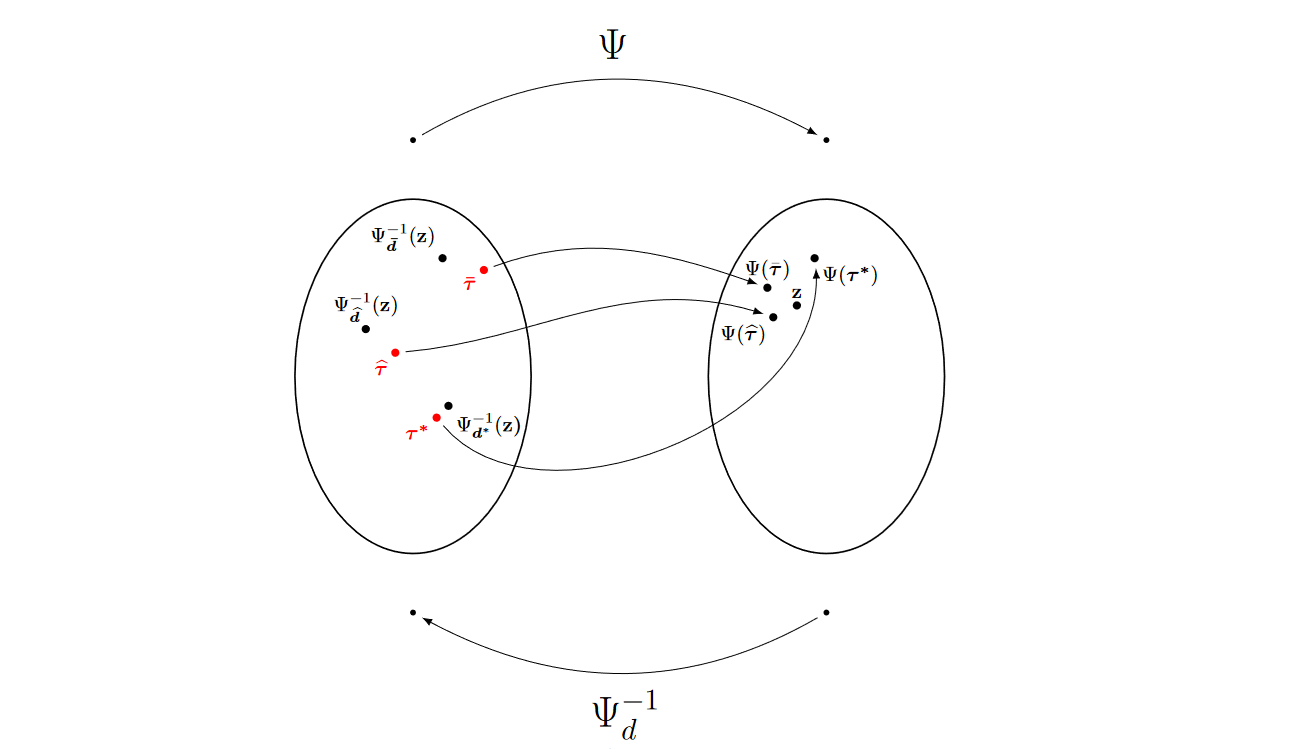}
\caption{Illustration of formulation \eqref{eq:Problem1} and \eqref{eq:P2noiseless}. Elements in red are in the set $\mathcal{P}_{C},$ the set of piecewise linear vectors with at most $C$ breakpoints. On the right, we observe the set where Problem \eqref{eq:Problem1} is formulated, with the Euclidean distance defining the fidelity term. With formulation \eqref{eq:P2}, we transfer the optimization problem to the domain of $\Psi$ using an auxiliary integer variable $\db \in \{0,1\}^{n}$. Depending on $\db$, the inverse $\Psi^{-1}_{\db}(\z)$ is situated in different parts of the domain, justifying the introduction of $\|\cdot\|{w_{\db}}$ to account for the local behavior of $\Psi$  }\label{fig:diagram_proof}
\end{figure}

\subsection{Noisy signal}\label{section:sub:noisy}
In the case of a noisy signal $\z \in \mathbb{R}^{n},$ there is no guarantee that the optimal solution $\db^{*}$ of \eqref{eq:P2noiseless} will be in the set $\mathcal{K}(\z).$ To see that, write $\z = \Psi(\bar{\x}) + \epsilon,$ where $\bar{\x} \in \mathcal{P}_{C}$ and $\epsilon$ is a random vector of dimension $n.$ Let $\bar{\z} = \Psi(\bar{\x}).$ If $\epsilon$ is small enough, we expect the solution of problem \eqref{eq:P2noiseless} to be $(\x^{*},\db^{*}),$
where  $\x^{*} = \bar{\x},$ and $\db^{*}$ is such that $\bar{\x} = \x^{*} = \Psi_{\db^{*}}^{-1}(\bar{\z}).$ Clearly $\db^{*} \in \mathcal{K}(\bar{\z}).$ On the other hand, there is no reason for $\db^{*} \in \mathcal{K}(\z).$
To extend Problem \eqref{eq:P2noiseless} to a noisy signal, we need to define the vector $\Psi_{\db}^{-1}(\z)$ when $\db \notin \mathcal{K}(\z).$  In this extension, we take into account indices $i \in \{1,...,n\}$ for which  $\psi^{-1}(\mathrm{z}_{i}) = \varnothing.$ 

\begin{definition}[$\Psi^{-1}_{\db}(\z)$ and $\|\cdot\|_{\w_{\db}}$]\label{def:Psi_w_noisy} Let $\z \in \mathbb{R}^{n}_{+},$ and $\db \in \{0,1\}^{n}.$ 
We extend the definition of $\Psi_{\db}^{-1}(\z)$ for all $\db\in \{0,1\}^{n}.$ For $i\in \{1,...,n\}$:
\begin{equation}\label{eq:defPsi_inv}
 \Psi^{-1}_{\db}(\z)_{i} :=    
\begin{cases} \psi^{-1}_{d_{i}}(\z_{i}),
  & \text{If } \psi^{-1}_{d_{i}}(\z_{i}) \neq \varnothing\\
  \infty, &  \text{If }  \psi^{-1}_{d_{i}}(\z_{i}) = \varnothing,
 \end{cases}
\end{equation}
where the notation $\psi^{-1}_{d_{i}}(\mathrm{z}_{i})$ is used both for the inverse set and inverse function. Denote $\bar{\mathbb{R}} = \mathbb{R} \cup \{\infty\},$ and $\w_{\db}$ defined in Definition \ref{def:defw}. For any $\boldsymbol{v} \in \bar{\mathbb{R}}^{n},$ we define: 
\begin{equation}\label{eq:defnormw}
\| \boldsymbol{v} \|_{\w_{\db}} = \sqrt{\sum_{\{i \in [n] : \boldsymbol{v}_{i} \neq \infty\}} w_{\db,i}^{2}v_{i}^{2}}.
\end{equation}
\end{definition}
In practice, Definition \ref{def:Psi_w_noisy} extends the definition of $\|\cdot\|_{\w{\db}}$ to values where $\db \notin \mathcal{K}(\z)$, but considers only indices $i$ where $\psi_{d_{i}}^{-1}(z_{i})$ is a singleton. Two reasons can motivate this definition. First, in the noiseless case, weights extend continuously to zero in the region where $\psi$ is not surjective. Observing Figure \ref{fig:linear_piecewise_linear_rep}~C, and defining $a := \lim_{t \rightarrow \infty}\psi_{1},$ we note that $\psi_{1}$ becomes indefinitely close to a constant function as $t$ tends to infinity, causing the corresponding weight $\w_{1,i}$ to tend to zero for indices $i \in \{1,...,n\}$ where $z_{i}$ is close, but above, $a$. In this context, a natural extension to the weights for indices $i\in \{1,...,n\}$ where $z_{i}$ is below $a$ is zero. In the noisy case, $\db \notin \mathcal{K}(\z)$ can also be the effect of noise, for example, in regions where $\z$ is above the maximum $\psi_{\text{max}} := \max_{t \in [0,\infty)}\psi(t)$. In these cases, the choice of not taking into account these indices in $\|\cdot \|_{\w_{\db}}$ is motivated by the numerical results presented in Section \ref{sec:numresults}.

\paragraph{Alternative optimization problem for the noisy case:} The extension of problem \eqref{eq:P2noiseless} for a noisy signal $\z$ reads: 
\begin{equation}\label{eq:P2}
\tag{\textbf{P2}'}
  (\x^{*},\db^{*}) := \argmin_{\{(\x,\db)  \in  \mathcal{P}_{C} \times \{0,1\}^{n}\}} \|\x - \Psi_{\db}^{-1}(\z)\|^{2}_{w_{\db}},   
\end{equation}
where $\|\cdot\|_{w_{\db}}$ and $\Psi^{-1}_{\db}(\z)$ are defined in Definition \ref{def:Psi_w_noisy}.


\section{Numerical approach}\label{sec:numericalmethod}

Note that the \textit{mixed integer nonlinear problem} \eqref{eq:P2} can be reformulated as follows:
\begin{equation}\label{eq:Problem2}
\begin{array}{lll}
& \underset{\x,\db}{\text{min}} & \frac{1}{2}\|\db \odot (\x - \zpulse)\|^{2}_{\w_{\1}} + \frac{1}{2}\|(\boldsymbol{1}-\db)\odot(\x - \zchase)\|_{\w_{\0}}^{2} \\
& ~~\text{s.t.}& \x \in \mathbb{R}^{n},~ \|\mathrm{L}\x\|_{0} \leq C \\
& & \db \in \{0,1\}^{n},
\end{array}
\end{equation}
where the operator $\odot$ represents the coordinatewise multiplication. $\1, \0 \in \{0,1\}^{n}$ are constant vectors, \mbox{$\zpulse := \Psi_{\1}^{-1}(\z)$} and $\zchase := \Psi_{0}^{-1}(\z).$ $\w_{\1}$ and $\w_{\0},$ are defined in \eqref{eq:defw}.

The advantages of this reformulation are twofold: First, for each fixed $\db$, problem \eqref{eq:Problem2} has a quadratic objective function with a non-convex constraint. This optimization problem can be approximately solved by $\ell_{1}$ regularization, in a formulation similar to     \textit{generalized lasso} \cite{generalizedLASSO}. On the other hand, even if the available set in \eqref{eq:Problem2} contains a large set of integer variables, we can attempt to reduce this set based on observations about the nature of solutions of problem \eqref{eq:P2}. This analysis will be developed in Section \ref{sec:constraints}.

\subsection{Combinatorial method for DNA-Inverse}\label{sec:comb_method}
In this section, we present a methodology to address problem \eqref{eq:Problem2}. Our approach involves solving this problem iteratively for each fixed $\db,$ while comparing the obtained optimal values. Clearly, directly applying this method to the entire set $\{0,1\}^{n}$ would be not tractable due to its exponential size. To mitigate this problem, we introduce a subset $\mathcal{D} \subset \{0,1\}^{n}$ in which the optimal solution $\db^* \in \mathcal{D}$. The specific computation of this subset will be discussed in Section \ref{sec:algorithm}.

For each $\db \in \mathcal{D}$, we propose to relax the optimization problem:
\begin{equation}\label{eq:f_criterion}
\begin{aligned}
& \underset{\x}{\text{min}} ~~ \frac{1}{2}\|\db \odot (\x - \zpulse)\|^{2}_{\w_{\1}} + \frac{1}{2}\|(\1-\db) \odot (\x - \zchase)\|_{\w_{\0}}^{2} \\
& ~~\text{s.t.} ~~\x \in \mathbb{R}^{n},~ \|\mathrm{L}\x\|_{0} \leq C \\
\end{aligned}
\end{equation}
into the $l_{1}$ regularized problem:
\begin{equation}\label{eq:piecewise_linear}
\begin{aligned}
\x^{*}_{\db} := &~ \underset{\x}{\text{argmin}} ~~ \frac{1}{2}\|\db \odot (\x - \zpulse)\|^{2}_{\w_{\1}} + \frac{1}{2}\|(\mathrm{I}-\db) \odot (\x - \zchase)\|_{\w_{\0}}^{2} + \lambda \|\mathrm{L}\x\|_{1}  \\
\end{aligned}
\end{equation}
The solutions $\x^{*}_{\db},$ for $\db \in \mathcal{D}$ will be compared using the objective function criterion \eqref{eq:f_criterion}:
\begin{equation}\label{eq:objective_function}
    \x^{*} := \min_{\{\db \in \mathcal{D}\}} F(\x^{*}_{\db}), \text{ where } F(\x^{*}_{\db}) := \frac{1}{2}\|\db \odot (\x_{\db}^{*} - \zpulse)\|^{2}_{\w_{\1}} + \frac{1}{2}\|(\1-\db) \odot (\x_{\db}^{*} - \zchase)\|_{\w_{\0}}^{2}
\end{equation}

\paragraph{Dual of Generalized-lasso:}
Note that problem \eqref{eq:piecewise_linear} can be written as a \textit{generalized-lasso} problem such as in \cite[Equation (2)]{generalizedLASSO}:
\begin{equation}\label{eq:gen_lasso}
 \underset{\x}{\text{argmin}}  \frac{1}{2}\|\w \odot (\x - \zd)\|_{2}^{2} + \lambda\|\Lrm \x\|_{1}  
\end{equation}
where $\z^{\db} = \db \odot \zpulse + (1-\db) \odot \zchase,$ and $\w \in \mathbb{R}^{n}$ is defined by:
\begin{equation}\label{eq:present_w}
    w_{i} = \left\{\begin{array}{ll}
        d_{i}w_{\1,i} + (1-d_{i})w_{\0,i} , & \text{ if } w_{\1,i} \text{ or } w_{\0,i} \neq 0 \\
        0, & \text{ if }  w_{\1,i} = 0 \text{ and } w_{\0,i} = 0.
        \end{array}\right.
\end{equation}
In Theorem \cite[Section 4]{generalizedLASSO}, a quadratic expression for the dual of the generalized-lasso is presented for the case $\w = \1$. We aim to extend this result to the scenario where $\w$ is any vector. To facilitate this extension, we introduce the following notation:
$$\mathcal{I}^{+} = \{i~:~ w_{i} > 0\},~~ \mathcal{I}^{0} = \{i~:~ w_{i} = 0\}$$

\begin{prop} Let $\w^{-1}$ defined by: $w^{-1}_{i} := \frac{1}{w_{i}},$ for all $i \in \mathcal{I}^{+}$ and $w^{-1}_{i} := 0,$ for $i \in \mathcal{I}^{0}.$ Let $\boldsymbol{u}^{*}$ be the solution of the following optimization problem:

\begin{equation}\label{eq:dual}
u^{*} = \begin{array}{lll}
& \underset{\boldsymbol{u}}{\text{argmin}} & \frac{1}{2}\|\w \odot (\Lrm^{\top} \boldsymbol{u})\|^{2} - \langle \mathrm{L}\zd,\boldsymbol{u} \rangle  \\
&\text{s.t.}& \|\boldsymbol{u}\|_{\infty} \leq \lambda \\
& & (\Lrm^{\top}\boldsymbol{u})_{i} = 0,~ \text{for } i \in \mathcal{I}^{0}
\end{array}
\end{equation}

Then, $\x^{*}$ defined by:
\begin{equation}
\left\{ \begin{array}{l}
  \tau_{i}^{*} =  \zd- (\w^2)^{-1} \odot (\Lrm^{\top} \boldsymbol{u}^{*}), ~~ i \in \mathcal{I}^{+} \\
  (\Lrm \x^{*})_{i} = 0 ~~ i \in \mathcal{I}^{0} 
\end{array}\right.
\end{equation}
is a solution of problem \eqref{eq:gen_lasso}.
\end{prop}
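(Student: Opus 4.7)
The plan is to derive both the dual problem and the primal recovery formula by classical Lagrangian duality applied to a variable-splitting reformulation, extending the argument of \cite[Section 4]{generalizedLASSO} to the case of a non-constant weight vector $\w$. The key new feature compared to that reference is the presence of indices $i \in \mathcal{I}^{0}$ where $w_{i}=0$: along these coordinates the quadratic data-fit term carries no information about $\tau_{i}$, and this is exactly what will force both the extra dual constraint $(\Lrm^{\top}\boldsymbol{u})_{i}=0$ and the auxiliary primal selection rule $(\Lrm \x^{*})_{i}=0$.

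First I would introduce an auxiliary variable $\boldsymbol{v}:=\Lrm\x \in \mathbb{R}^{n-2}$ and rewrite \eqref{eq:gen_lasso} as
\begin{equation*}
\min_{\x,\boldsymbol{v}}\ \tfrac{1}{2}\|\w \odot (\x - \zd)\|_{2}^{2} + \lambda\|\boldsymbol{v}\|_{1} \quad \text{s.t.} \quad \boldsymbol{v} = \Lrm \x ,
\end{equation*}
with Lagrangian $L(\x,\boldsymbol{v},\boldsymbol{u}) = \tfrac{1}{2}\|\w \odot (\x - \zd)\|_{2}^{2} + \lambda\|\boldsymbol{v}\|_{1} + \langle \boldsymbol{u}, \Lrm\x - \boldsymbol{v}\rangle$. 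Minimizing in $\boldsymbol{v}$ yields the Fenchel conjugate of $\lambda\|\cdot\|_{1}$, which is $-\infty$ unless $\|\boldsymbol{u}\|_{\infty}\leq \lambda$. Minimizing in $\x$ splits coordinate-wise: for $i\in \mathcal{I}^{+}$ the map $\tau_{i}\mapsto L$ is strictly convex quadratic and the stationarity condition $w_{i}^{2}(\tau_{i}-z^{d}_{i}) + (\Lrm^{\top}\boldsymbol{u})_{i}=0$ gives $\tau_{i} = z^{d}_{i} - (\Lrm^{\top}\boldsymbol{u})_{i}/w_{i}^{2}$; for $i\in \mathcal{I}^{0}$ the map $\tau_{i}\mapsto L$ is affine, so the infimum is $-\infty$ unless $(\Lrm^{\top}\boldsymbol{u})_{i}=0$. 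This produces exactly the second constraint in \eqref{eq:dual}, and substituting the closed-form $\x(\boldsymbol{u})$ back into $L$ yields, up to a constant and a sign flip turning maximization into minimization, the quadratic dual objective displayed in \eqref{eq:dual}.

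To finish, I would invoke strong duality, which holds because the primal is convex, proper, lower semicontinuous, and the single linear equality constraint trivially satisfies a Slater-type qualification. Hence any primal-dual optimal pair $(\x^{*},\boldsymbol{u}^{*})$ satisfies the KKT system, and the stationarity condition in $\x$ immediately gives the first recovery equation $\tau^{*}_{i} = z^{d}_{i} - (\Lrm^{\top}\boldsymbol{u}^{*})_{i}/w_{i}^{2}$ on $\mathcal{I}^{+}$. On $\mathcal{I}^{0}$ the primal objective is indifferent to $\tau_{i}$, so the primal minimizer is in general not unique; the condition $(\Lrm \x^{*})_{i}=0$ on $\mathcal{I}^{0}$ is a selection rule, which must be shown to be both feasible (compatible with $\|\Lrm\x^{*}\|_{1}\leq$ its optimal value and with the dual support constraint $(\Lrm^{\top}\boldsymbol{u}^{*})_{i}=0$) and optimal.

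The main obstacle is precisely this degeneracy along $\mathcal{I}^{0}$: because the primal Hessian has a nontrivial nullspace supported on those coordinates, the standard one-to-one recovery formula of \cite{generalizedLASSO} does not apply directly, and one has to argue that the proposed completion of $\x^{*}$ is consistent with some subgradient of $\lambda\|\cdot\|_{1}$ at $\Lrm\x^{*}$, equal to $\boldsymbol{u}^{*}$. Once this verification is carried out, the rest is a routine adaptation of the unweighted case, with $(w_{i}^{2})^{-1}$ playing the role of the identity Hessian appearing in the original derivation.
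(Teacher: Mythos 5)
Your duality derivation follows essentially the same route as the paper: the paper also dualizes the $\ell_{1}$ term to reach the saddle-point form \eqref{eq:maxmin}, performs the inner minimization over $\x$ coordinatewise (stationarity on $\mathcal{I}^{+}$ giving the recovery formula, the requirement $(\Lrm^{\top}\boldsymbol{u})_{i}=0$ on $\mathcal{I}^{0}$ to keep the infimum finite), and substitutes back to obtain the quadratic dual \eqref{eq:dual}. Your explicit splitting $\boldsymbol{v}=\Lrm\x$ with a Lagrangian is just a more spelled-out version of the same computation, and the strong-duality/KKT step is unproblematic since the problem is convex with a single linear coupling.

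However, there is a genuine gap: the one point where the statement goes beyond the unweighted case of \cite[Section 4]{generalizedLASSO} is precisely the assertion that completing $\x^{*}$ by the selection rule $(\Lrm\x^{*})_{i}=0$ for $i\in\mathcal{I}^{0}$ yields an actual minimizer of \eqref{eq:gen_lasso}, and you explicitly defer this (``once this verification is carried out\dots''). A complete proof must contain that verification, e.g.\ by checking that the completed $\x^{*}$ together with $\boldsymbol{u}^{*}$ satisfies the full subgradient optimality condition $0\in\partial f(\x^{*})$ of the primal objective, including at the components of $\Lrm\x^{*}$ influenced by the degenerate coordinates; this is how the paper argues (it writes $0\in\partial f(\x^{*})$ componentwise and reads off \eqref{eq:optm_cond} at indices in $\mathcal{I}^{0}$, then joins it with the conditions coming from the inner minimization). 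Moreover, your justification of the degeneracy is inaccurate: for $i\in\mathcal{I}^{0}$ the primal objective is \emph{not} indifferent to $\tau_{i}$ --- only the weighted quadratic term is, while $\lambda\|\Lrm\x\|_{1}$ still depends on $\tau_{i}$ through the neighbouring second differences. What is affine in $\tau_{i}$ is the inner Lagrangian after the $\ell_{1}$ term has been replaced by the linear coupling $\langle\x,\Lrm^{\top}\boldsymbol{u}\rangle$; it is exactly the residual dependence of the regularizer on $\tau_{i}$ that the missing verification has to address, so the argument cannot be dismissed as a free choice on coordinates the objective ignores.
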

\begin{proof} Denote $f(\tau) = \frac{1}{2}\|\w \odot (\x - \z^{d})\|_{2}^{2} + \lambda\|\Lrm \x\|_{1}$ the convex objective function of problem \eqref{eq:gen_lasso}. A solution $\x^{*}$  must satisfy:
$$ 0 \in \partial f(\x^{*}).$$
Note that this equation can be written componentwisely and  when $i \in \mathcal{I}^{0}$, this implies that:
\begin{equation}\label{eq:optm_cond}
(\mathrm{L}\x^{*})_{i} = 0, \text{ for } i \in \mathcal{I}^{0}
\end{equation}
On the other hand, problem \eqref{eq:gen_lasso} can be expressed through its dual problem:
\begin{equation}\label{eq:maxmin}
\begin{aligned}
& \underset{u}{\text{max}}~\underset{\x}{\text{min}} ~~ \frac{1}{2}\|\w \odot (\x - \zd)\|_{2}^{2} + \langle \x , \Lrm^{\top} \boldsymbol{u}\rangle \\
& ~\text{s.t.} ~~\|\boldsymbol{u}\|_{\infty} \leq \lambda. \\
\end{aligned}
\end{equation}
The minimization in $\x$ has the optimal conditions:
\begin{equation}\label{eq:optm_cond2}
\begin{aligned}
 \tau_{i} &= \zd_{i} - ((\w^{2})^{-1} \odot (\Lrm^{\top} \boldsymbol{u}))_{i}, ~~  i \in \mathcal{I}^{+} \\
 \L^{\top}&\boldsymbol{u}_{i} = 0, ~~  i \in \mathcal{I}^{0} 
\end{aligned}
\end{equation}
replacing these conditions in problem \eqref{eq:maxmin}, we obtain the dual variable $u^{*}$ as a solution of problem \eqref{eq:dual}.
Joining conditions in \eqref{eq:optm_cond} and \eqref{eq:optm_cond2} we have:
\begin{equation}
\left\{ \begin{array}{l}
  \tau_{i}^{*} =  \zd- (\w^{2})^{-1} \odot (\Lrm^{\top} \boldsymbol{u}^{*}), ~~ i \in \mathcal{I}^{+} \\
  (\Lrm \x^{*})_{i} = 0 ~~ i \in \mathcal{I}^{0} 
\end{array}\right.
\end{equation}
\end{proof}

\subsection{Constraints of the set $\mathcal{D}$}\label{sec:constraints}
In this section, we discuss how to reduce the set of integer variables in problem \eqref{eq:Problem2}. User
In principle, the solution $\db^{*}$ could be any element of $\{0,1\}^{n}$. A more detailed examination of the problem will show that additional constraints can be incorporated in the available set of \eqref{eq:Problem2} without changing its solution. This analysis will be conducted in two parts: In the first part, we will identify indices for which $\db$ is allowed to oscillate, that is, when $d_{i} = 0$ and $d_{i+1} = 1$ (or the opposite). In the second part, we will consider the values where the signal $\z = 0$ and its impact on the variable $\db \in \{0,1\}^{n}$. We aim defining a subset $\mathcal{D} \subset \{0,1\}^{n}$ of the form:
\begin{equation}\label{eq:Ddef}
    \mathcal{D} = \{\db:  \mathrm{B}\db = \0_{b}, ~\mathrm{A}\db = \0_{a}\},
\end{equation}
where the linear constraints are defined by matrices $\mathrm{A} \in \mathbb{R}^{n,a},$ and  $\mathrm{B} \in \mathbb{R}^{n,b},$ and
$\0_{a} \in \mathbb{R}^{a}$ and $\0_{b} \in \mathbb{R}^{b}$ are constant vectors.

\paragraph{Oscillations on $\db:$} Consider a noiseless signal $\z = \Psi(\bar{\x}),$ such that $\bar{\x} \in \mathcal{P}_{C}.$ Then there exists $\bar{\db} \in \{0,1\}^{n},$ such that $\bar{\x} = \Psi_{\bar{\db}}^{-1}(\z).$ Suppose that $(\bar{\x},\bar{\db})$ is unknown. We aim to define the smallest possible  set $\mathcal{D}$ such that $\bar{\db} \in \mathcal{D}.$ 

For $\bar{\x} \in \mathcal{P}_{C},$ the number of indices for which $\bar{\x}$ crosses $\tau_{0}$ is bounded by $C:$ 
\[\# \{i :  \bar{\tau}_{i} \leq \tau_{0} \leq \bar{\tau}_{i+1} \ \text{ or }  \bar{\tau}_{i+1} \leq \tau_{0} \leq \bar{\tau}_{i} \} \leq C. \]
On the other hand, for $i \in \{1,...,n\}:$ 
\begin{equation}\label{eq:tau}
\bar{\tau}_{i} = \Psi_{\bar{\db}}^{-1}(\z)_{i} = 
\begin{cases}
 \psi_{\0}^{-1}(\mathrm{z}_{i}) \in [0, \tau_{0}], \text{ if } \bar{d}_{i} = 0 \\
 \psi_{\1}^{-1}(\mathrm{z}_{i}) \in [\tau_{0},\infty), \text{ if } \bar{d}_{i} = 1.
\end{cases}
\end{equation}
Then, indices where $\bar{\db}$ change between $0$ and $1$ and indices where $\bar{\x}$ passes through $\tau_{0}$ are the same, as illustrated in Figure \ref{fig:constraints}. For example:  
$$\bar{d}_{i} = 0 \text{ and } \bar{d}_{i+1} = 1 \Rightarrow \bar{\tau}_{i} \leq \tau_{0} \leq \bar{\tau}_{i+1}.$$
Then, we define the set of transitions on $\db$ by:
\begin{equation}\label{eq:I}
\mathcal{I}^{A} := \{ i : \bar{d}_{i} + \bar{d}_{i+1} = 1 \} = \{i : \bar{\tau}_{i} \leq \tau_{0} \leq \bar{\tau}_{i+1} \text{ or } \bar{\tau}_{i+1} \leq \tau_{0} \leq \bar{\tau}_{i}  \}.
\end{equation}
From this characterization, we aim computing $\mathcal{I}^{A}$ using only the known signal $\z$ and the function $\Psi.$  To achieve this, we draw inspiration from the case where the signal $\z$ is defined continuously. Let $z:I \subset \mathbb{R} \rightarrow \mathbb{R}$ be a continuous function, where $I$ is an interval. Consider the functions $\psi^{-1}_{1}(z): I \rightarrow [\tau_{0},\infty),$  $\psi^{-1}_{0}(z): I \rightarrow [0,\tau_{0}]$  and $\bar{d}:I \rightarrow \{0,1\}.$ Let $\bar{\tau}$ be defined as: 
\begin{equation}\label{eq:tau_cont}
\bar{\tau}(x) = 
\begin{cases}
 \psi_{0}^{-1}(z(x)) \in [0, \tau_{0}], \text{ if } \bar{d}(x) = 0 \\
 \psi_{1}^{-1}(z(x)) \in [\tau_{0},\infty), \text{ if } \bar{d}(x) = 1.
\end{cases}
\end{equation}
Denote $f := \psi^{-1}_{1}(z) - \psi_{0}^{-1}(z).$ Then, if $\bar{\tau}$ crosses $\tau_{0},$ it exists $\bar{x} \in I$ such that $\tau(\bar{x}) = \tau_{0},$ implying $f(\bar{x}) = 0.$ Since $f \geq 0,$ $\bar{x}$ is a local minima of $f.$ The reasoning is illustrated in Figure \ref{fig:constraints}. We will employ this continuous intuition in a vectorial context, i.e, detect indices in $\mathcal{I}^{A}$ as local minima of the vector:
\begin{equation}\label{eq:D_localminima}
    \Psi_{\0}^{-1}(\z) - \Psi_{\1}^{-1}(\z).
\end{equation}
note that only the input signal $\z$ is necessary to compute this vector. Then:
\[\mathcal{D} = \{\db \in \{0,1\}^{n}, d_{i} = d_{i+1} \text{ for } i \in \mathcal{I}^{A} \}\]
This strategy is interesting since we can extract \textit{a priori} information about the indices where the solution $\bar{\db}$ can oscillate between $0$ and $1$. Considering that we are able to detect $\mathcal{I}^{A} = \{i_{1},...,i_{J}\}$, we define a matrix $\mathrm{A} \in \mathbb{R}^{n \times a}$ in such a way that for $j \in \{1,...,J\}:$ 
\begin{equation}\label{eq:matrix_A}
\begin{aligned}
\mathrm{A}[i,j] &= 1,  \text{ for } i = i_{j}\\
\mathrm{A}[i,j] &= -1,  \text{ for } i = i_{j} + 1\\
\mathrm{A}[i,j] &= 0,  \text{ for } i \neq i_{j},i_{j} + 1.
\end{aligned}
\end{equation}
If $\db \in \{0,1\}^{n}$ is a solution of problem \eqref{eq:Problem2}, it implies that $\mathrm{A}\db = \0_{m} \in \mathbb{R}^{m}.$

\paragraph{Zeros of  $\z:$ }Another source of information are indices where the signal $\z = 0$. Note that for any $i\in \{1,...,n\}$ $\mathrm{z}_{i} = 0$ implies $\tau_{i} = 0,$ which implies $\db = 0.$ Then the set: $\mathcal{I}^{B} = \{i : \mathrm{z}_{i} = 0\},$ is also important. The set $\mathcal{D}$ is defined by:
\begin{equation}\label{eq:defD2}
\mathcal{D} = \{\db \in \{0,1\}^{n} : d_{i} = d_{i+1} \text{ for } i \in \mathcal{I}^{A},~ d_{i} = 0 \text{ for } i \in \mathcal{I}^{B}\},
\end{equation}
The matrix $\mathrm{B}$ has as columns canonical vectors for indices in  $\mathcal{I}^{B}.$  The set $\mathcal{D}$ can be written as in \eqref{eq:Ddef}. In Section \ref{sec:algorithm}, we discuss how the noise in $\z$ impacts the definition of $\mathcal{D}$ for real data.

\begin{figure}
\includegraphics[width = \linewidth]{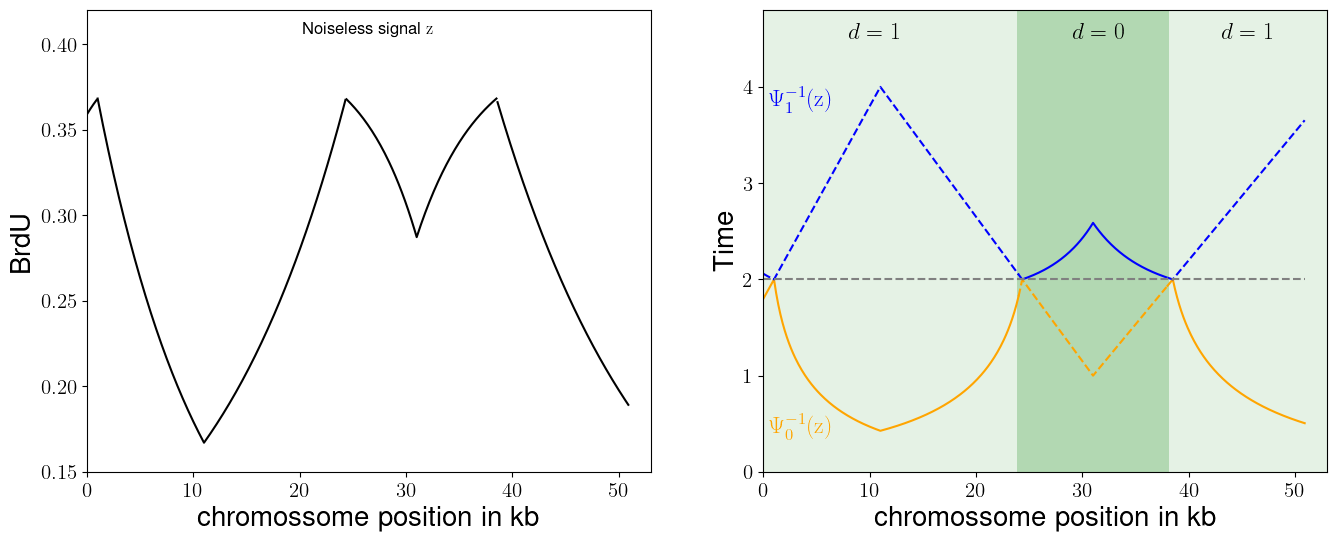}
\caption{Illustration of possible oscillations of $\bar{\db} \in \mathcal{D}.$ (left) Noiseless signal $\z = \Psi(\bar{\x})$ for $\bar{\x} \in \mathcal{P}_{C}.$ (right) Inverses $\Psi_{\1}^{-1}(\z)$ (blue) and $\Psi_{\0}^{-1}(\z)$ (orange). The piecewise linear $\bar{\x}$ can be observed by the dashed lines. For each index $i \in \{1,...,n\},$ when $\bar{\tau}_{i}$ is blue, $\bar{d}_{i} = 1.$ When $\bar{\tau}_{i}$ is orange, $\bar{d}_{i} = 0.$ When $d$ oscillates, $\tau$ crosses the gray dashed line $r(x) = \tau_{0} = 2$. Note the oscillations of $\bar{\db},$ illustrated by transitions in shades of green, coincide with indices where $\Psi_{\1}^{-1}(\z) = \Psi_{\0}^{-1}(\z).$ In addition: $\Psi_{\1}^{-1}(\z) - \Psi_{\0}^{-1}(\z) \geq 0.$ We conclude that oscilations in $\db$ are local minima of the vector $\Psi_{\1}^{-1}(\z) - \Psi_{\0}^{-1}(\z).$ }\label{fig:constraints}
\end{figure}

\subsection{Algorithm}\label{sec:algorithm}
In this section, we discuss the algorithm that provides a numerical solution for Problem \eqref{eq:P2}. The initialization of the algorithm accounts for the high presence of noise in the real DNA replication data,
and occurs in two steps: 1. The computation of the weights $\w_{\db}$ defined \eqref{eq:defw}; 2. The computation of the set $\mathcal{D}$ defined in \eqref{eq:defD2}. The algorithm is described in Algorithm \ref{alg:DNA-Inverse}. 

\paragraph{Weights:} The weights $\w_{\db}$ are computed from weights $w_{\0}$ and $w_{\1},$ for $\0,\1 \in \mathbb{R}^{n}$ in the following way: $\w_{\db} = \db \odot \w_{\1} + (1 - \db) \odot \w_{\0}.$ The calculus of weights $\w_{\0},\w_{\1}$ is done following Definition \ref{def:defw}. For stability reasons, a smoother version $\widetilde{\z}$ of the signal $\z$ replaces the original signal in this calculus. This computation uses the Savitzky-Golay Smoothing Filter that results in $\widetilde{\z}$ such that for each $i\in \{1,...,n\}$: $\widetilde{\mathrm{z}}_{i} = \sum_{i = -2}^{2} c_{n}\mathrm{z}_{i}, $ where $c_{n} = 1/5.$ The weights $\w_{\0},\w_{1}$ are illustrated in Figure \ref{fig:weights} for a noisy signal $\z.$

\begin{figure}[t]
\centering
\includegraphics[width = 0.9\linewidth]{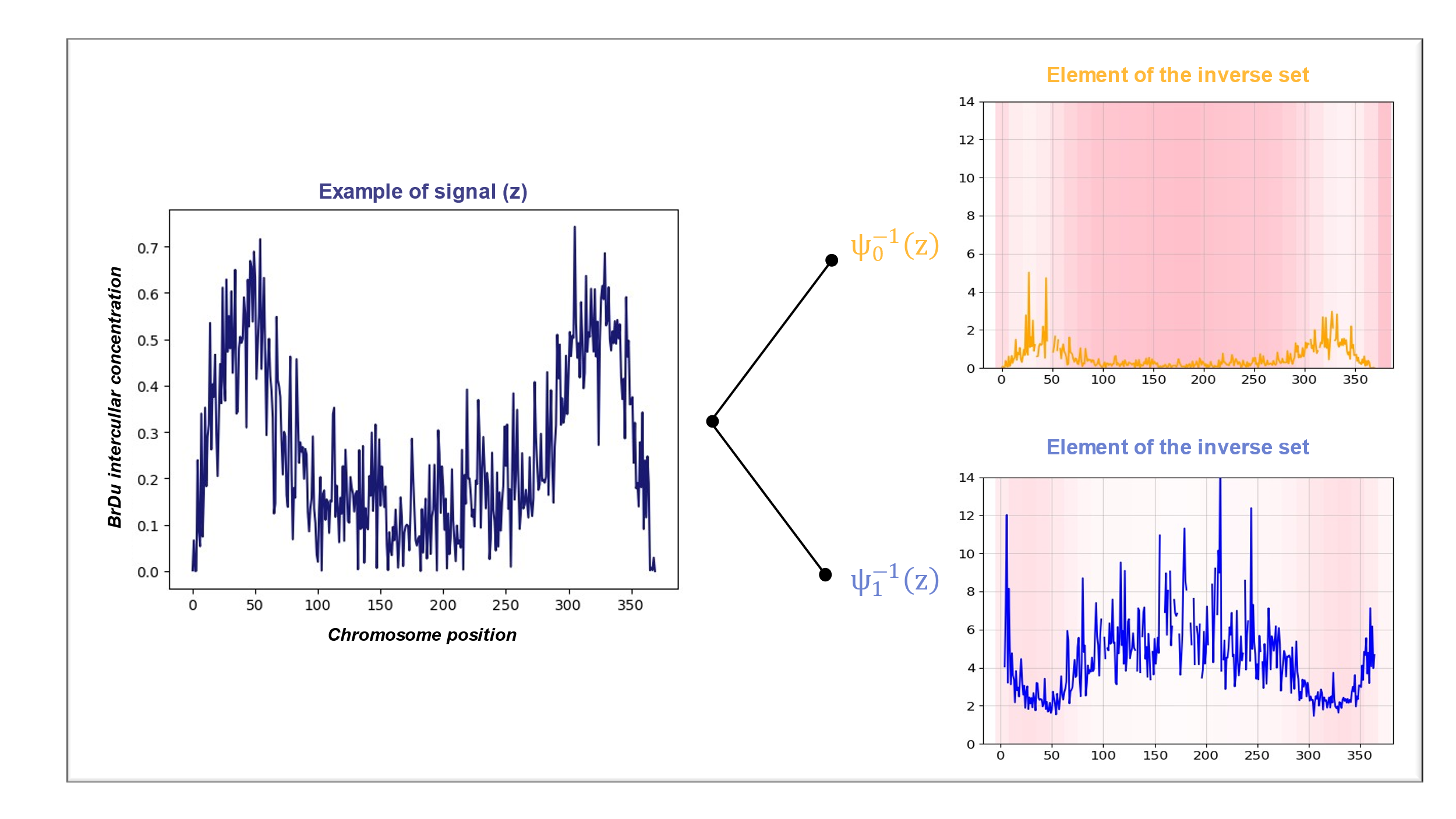}
\caption{Illustration of $\Psi_{\db}^{-1}(\z)$ and $\w_{\db}$ in the noisy case, for $\db = \0,\1 \in \mathbb{R}^{n}$. (left) Noisy read. (right) Vectors $\Psi_{\1}^{-1}(\z)$ and $\Psi_{\0}^{-1}(\z).$ The vector  $\Psi_{\1}^{-1}(\z)$ is not represented in indices where $\Psi_{\1}^{-1}(\z) = \infty.$ We observe that the local behavior of $\Psi$ expands or contracts the noise present in the signal. For this reason, to compare the precision of a piecewise linear approximation of these curves, it is necessary to define $\|\cdot\|_{\w_{\db}}.$ The weights in $\w_{\0},\w_{\1}$ are represented by the intensity of the pink background for $\Psi_{\1}^{-1}(\z)$ and $\Psi_{\0}^{-1}(\z)$ respectively. The more intense the background, the greater the value of $\w_{\0},\w_{\1}$ associated with this part of the signal. We note that more significant noise requires smaller weights.}\label{fig:weights}
\end{figure}

\paragraph{Set  $\mathcal{D}$:} The set $\mathcal{D},$ defined in \eqref{eq:defD2} depends on the set of indices $\mathcal{I}^{B}$ and $\mathcal{I}^{A}.$ For the noiseless case, the indices in $\mathcal{I}^{B}$ are the indices where the signal $\z$ is equal to zero. In the noisy case, this detection stills simple, since the zeros of the signal $\z$ are subjected to considerably less noise then the rest of the signal (see Figure \ref{fig:reads}). Then:
$$\mathcal{I}^{B} = \{i : \mathrm{z}_{i} = 0 \text{ and } \mathrm{z}_{i+1} = 0\}.$$
and $b:= \#\mathcal{I}^{B}.$ In the case of the set $\mathcal{I}^{A},$ as discussed in Section \ref{sec:constraints}, the computation involves the vector: 
\begin{equation}
   \boldsymbol{h} := \Psi_{\0}^{-1}(\z) - \Psi_{\1}^{-1}(\z).
   \label{eq:hdef}
\end{equation}
The indices of local minima of $h$ indicate possible oscillations on $\db$ (from $0$ to $1$ or the opposite). In python, many packages are available to compute this local minima. Since this vector have the same size of the signal $\z,$ which ranges from $100$ to $1000,$ the time required for this computation is negligible.  Due to noise, the local minima of $\boldsymbol{h}$ are treated as the centers of intervals with a certain size: $s^{A}$ (an input parameter) in which $\db$ is allowed to oscillate. Denote $\mathcal{M} = \{i^{\boldsymbol{h}}_{1},...,i_{k}^{\boldsymbol{h}}\}$ the indices of local minimima of $\boldsymbol{h}$. Denote $I_{1}^{\boldsymbol{h}},...,I_{k}^{\boldsymbol{h}},$ the correspondent intervals of size $s^{A}$ centered on the correspondent elements of $\mathcal{M}.$ Then the set $\mathcal{I}^{A}$ is defined as:
$$\mathcal{I}^{A} = \{i \in \{1,...,n\} : i \notin I^{\boldsymbol{h}}_{j} \text{ for } j \in \{1,...,k\}\}.$$
Then $a:= n - ks^{A}.$ Clearly, the set $\mathcal{D}$ is capable of drastically reducing the possible solutions $\db \in \{0,1\}^{n}.$ Nevertheless, the best upper bound for $\mathcal{D}$ cardinality is: $\#\mathcal{D} \leq (s^{A})^{k}$ which stills large. Therefore, we consider a representative subset of $\mathcal{\tilde{D}} \subset \mathcal{D}$. It work as follows: we divide each interval $I_{i}^{\boldsymbol{h}}$ into $m^{A}$ (an input parameter) equal parts, and consider the possible combinations for $i \in \{1,...,k\}$.  Thus, the set $\# \mathcal{\tilde{D}} \leq (m^{A})^{k}$ elements. In general, for DNA replication signals considered in the application, $k$ varies between $2$ and $5$.

\paragraph{Choice of parameters: } The parameter $s^{A}$ is  choose as $60$ units, or $0.6kb.$ The parameter $m$ is chose as $3.$ These choices aim to balance good results with low computation time. Higher values of $m$ directly impact the computation time as they increase the number of elements in $\mathcal{\tilde{D}}$. The value of $s^{A}$ is more associated with the amount of noise and uncertainty regarding the local minima of the vector $h$ in \eqref{eq:hdef}. The parameter $\lambda$ is empirically fixed as $8.$

\paragraph{Algorithm:} According to Section \ref{sec:comb_method}, the DNA-inverse algorithm loops as follows:
 \begin{algorithm}[t]
  \caption{DNA-Inverse}\label{alg:DNA-Inverse}
  \KwData{Input data: $\mathbf{z}.$ Parameters: $s^{A}$ and $m^{A}$.}
 
  \textbf{Initialization:} \\
  Compute weights $w_{\db}$. Compute the sets $\mathcal{\tilde{D}} \subset \mathcal{D}.$ Set: $\mathcal{D}_{\text{past}} = \emptyset$\;
  
  \textbf{Main Loop:} \\
  \For{$\db \in \mathcal{\tilde{D}}$}{

    \textbf{Step 0:} $\mathcal{D}_{\text{past}} \gets \mathcal{D}_{\text{past}} \cup \{\db\}$\;
    \textbf{Step 1:} Solve the optimization problem \eqref{eq:piecewise_linear} by its dual formulation \eqref{eq:dual} (quadratic optimization), obtaining a solution $\x_{\db}^{*}$
    \;
    
    \textbf{Step 2:} Compute the objective of problem \eqref{eq:Problem2} for $\x^{*}_{\db}$ (without $\ell_{1}$ regularization) and compare with objective values of previous 
    $\db \in \mathcal{D}_{\text{past}}:$ 
    \[\db^{*}:= \argmin_{\db \in \mathcal{D}_{\text{past}}} F(\x_{\db}^{*}),~~\x^{*} := \tau_{\db^{*}}^{*},\] 
    $F$ is defined in \eqref{eq:objective_function}.
  }
  
  \textbf{Output:} $\tau^{*},d^{*}$  \\

\end{algorithm}

\section{Numerical results}\label{sec:numresults}

The numerical results presented in this section utilize real data obtained from yeast DNA \cite{biology}. We opt for real data due to the challenge of fully replicating the type of noise present in this dataset. The signals vary in size from $50$ to $1000$, and a total of $300$ reads are analyzed with an average of 2.3 detected forks by read. Biologists, with their trained eye, are capable of recognizing the replication events associated with each of these reads. The objective of this test is to detect these events automatically. The FORQ-seq technique \cite{Forkseq} has enabled scientists to significantly increase the amount of data available for analysis. The challenge now is to develop methods that do not rely on individual analysis of each sample, allowing for a considerable increase in the amount of analyzed data.

The numerical results from the DNA-Inverse method will be divided into two parts: Section \ref{sec:comp} compares DNA-Inverse with an state-of-the-art proximal method capable of providing local minima to  \eqref{eq:Problem1}. In Section \ref{sec:advantages}, we explore the advantages of this method and discuss its relevance for DNA replication analysis.

\subsection{Comparison with state-of-the-art method}\label{sec:comp}

When $\Psi$ is non-linear, the resulting optimization problem \eqref{eq:Problem1} is generally non-convex,  which is a major challenge in optimization.  Recent theoretical and numerical advancements have significantly improved the treatment of problems of type \eqref{eq:Problem1}, showing remarkable flexibility concerning the types of operators $\Psi$ and regularization terms \cite{Valkonen2021,Valkonen_accelerationprimaldual,ADMM_nonconvex,ADMM_convergence_2}.  However, a key limitation of these methods lies in their pursuit of local solutions, due to the non-convex nature of the problem. In relevant applications, such as DNA replication analysis, attaining only local solutions do not provide substantial progress towards achieving the overall objective. In these cases, numerical methods should focus on special applications.

The numerical methods proposed in \cite{Valkonen2021,ADMM_convergence_nonconvex} address problems of type \eqref{eq:Problem1}  considering the $\ell_{1}$ regularization to impose piecewise linear solutions \cite{Valkonen2021,generalizedLASSO}:
\begin{equation}\label{eq:Problem1+l1}
\min_{\x \in \mathbb{R}^{n}}\|\z - \Psi(\x)\|_{2}^{2} + \gamma\|\mathrm{L}\x\|_{1},
\end{equation}
for some $\gamma > 0.$ In this section, we adopt the primal-dual formulation  with the PDPS algorithm  \cite[Algorithm 1]{Valkonen_accelerationprimaldual}. The reason for this choice is that this numerical method is treated specially in the case of non-linear inverse problems such as \eqref{eq:Problem1}, including the choice of parameters involved in iterations. In this context, consider the convex conjugate formula applied to the fidelity term:
\begin{equation}
G(\boldsymbol{u}) = \|\boldsymbol{u} - \z\|_{2}^{2}, ~~~ G^{*}(\boldsymbol{y}) = \sup_{\boldsymbol{u} \in \mathbb{R}^{n}}~\langle \boldsymbol{u},\boldsymbol{y} \rangle - G(\boldsymbol{u}). 
\end{equation}
Replacing the variable $\boldsymbol{u}$ by $\Psi(\x)$, we obtain an equivalent minmax formulation of problem \eqref{eq:Problem1+l1}:
\begin{equation}\label{eq:minmax_form}
\min_{\x \in \mathbb{R}^{n}}\max_{\boldsymbol{y} \in \mathbb{R}^{n}}\gamma\|\mathrm{L}\x\|_{1} + \langle \Psi(\x),\boldsymbol{y} \rangle - G^{*}(\boldsymbol{y}).
\end{equation}
The PDPS algorithm proposes to solve \eqref{eq:minmax_form} using the same principle of proximal point methods. This algorithm iterates over $k \in \mathbb{N}$:
\begin{equation}\label{eq:alg_valk}
\begin{cases}
\x^{k+1} = \text{prox}_{\sigma_{1}\gamma\|\cdot\|_{1}}(\x^{k} - \sigma_{1}\B'(\x^{k})y^{k})\\
\boldsymbol{y}^{k+1} = \text{prox}_{ \sigma_{2}(G^{*} - 2 \langle \Psi(\x^{k}),\cdot \rangle)}(\boldsymbol{y}^{k} - \sigma_{2}\Psi(\x^{k})),
\end{cases}
\end{equation}
for $\sigma_{1},\sigma_{2} > 0.$ In order to ensure the weak convergence of this algorithm \cite[Theorem 1]{Valkonen2021}, $\sigma_{1}$ and $\sigma_{2}$ might respect the following inequality established in \cite[Example 6]{Valkonen2021}:
\begin{equation}
\sigma_{1} \leq \frac{1}{\sigma_{2}L_{\B}^{2} + L_{\B'}\rho_{\boldsymbol{y}}/2}
\end{equation}
where $L_{\B},L_{\B'}$ represent the Lipschitz contants of $\B$ and $\B'$ respectively. And $\rho_{\boldsymbol{y}}$ the radius of the ball where the iterative sequence \eqref{eq:alg_valk} converges. We can easily estimate $L_{\B},L_{\B'} \leq 1$ by computing the derivative of order 1 and 2 of $\B.$ In the following example we compute $\rho_{\boldsymbol{y}} = 1$ by estimating the norm of variable $\boldsymbol{y}$ empirically. We also set $\gamma = 1$ by studying the parameter for which the provided solution $\x$ is piecewise linear. 

\paragraph{Experiment with noiseless signal and local minima: }
Consider a noiseless read: $\z = \Psi(\bar{\x}),$ where
$\z$ and $\bar{\x}$ are illustrated in Figure \ref{fig:simul_DNAInverse}. Problem \eqref{eq:Problem1+l1} is non-convex, implying that a local minimum can not be generalized as a global minimum. For each initial point  $(\x_{\text{initial}},\boldsymbol{y}_{\text{initial}}),$ scheme \eqref{eq:alg_valk} find a local solution of problem \eqref{eq:Problem1+l1}.  In Figure \ref{Fig:comp_initial}, we analyze the dependence of PDPS solution with respect to these initial points. Let $\x_{\text{Initial}}^{i},$ for $ i \in \{1,2,3,4\}$ be possible initial points described as follows:  $\x_{\text{initial}}^{1}$ and $\x_{\text{initial}}^{2}$ are constant vectors with values $0.2$ and $5$ respectively. $\x_{\text{initial}}^{3}$ is generated by an uniform distribution between $0$ and $5,$ and $\x_{\text{initial}}^{4}$ is a perturbation of the ground truth $\bar{\x}.$  In Table \ref{tab:obj_values} we compare the objective value of different local minima and the correspondent execution time. We observe that only $\x^{4}_{\text{initial}}$ is capable to provide the optimal global solution provided by DNA-inverse. In addition, the runtime time of DNA-Inverse is considerably lower then PDPS. In Figure \ref{fig:simul_DNAInverse}, we observe that the DNA-Inverse method closely approximates the original timing profile $\bar{\x}.$ Note that an initial point is not necessary for Algorithm \ref{alg:DNA-Inverse}.

\begin{figure}[t]
\includegraphics[width = 0.95\linewidth]{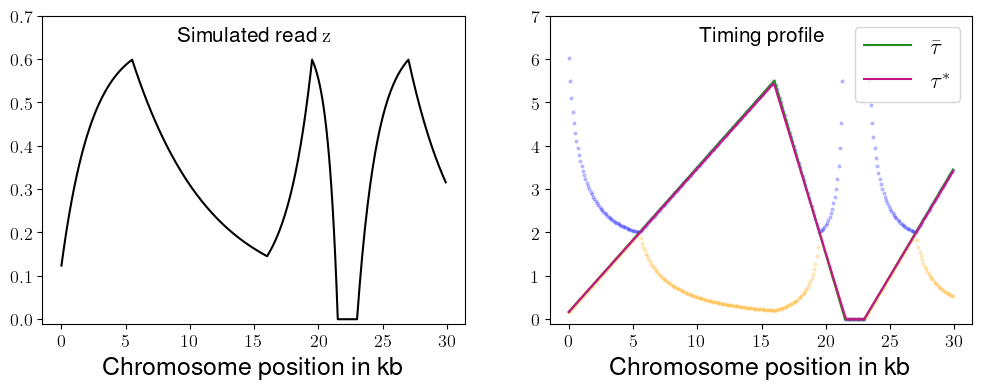}
\captionof{figure}{(left) Simulated read $\mathrm{z} = \Psi(\bar{\x})$. (right) Solution $\x^{*}$ obtained via the DNA-Inverse method (in pink) compared to ground truth $\bar{\x}$ (in green). In orange, $\Psi_{\mathcal{\0}}^{-1}(\z)$, and in blue, $\Psi_{\mathcal{I}}^{\1}(\z)$. We observe a close resemblance between the piecewise linear vectors $\bar{\x}$ and $\x^{*}$.}\label{fig:simul_DNAInverse}
\end{figure}

\begin{table} 
\centering
\begin{tabular}{|c|c|c|c|}
 \hline 
Method &$\x_{\text{initial}}$ & Objective value of local minima & Execution time\\ 
 \hline
 PDPS &$\x_{\text{initial}}^{1}$ &$ 0.18 $& $45$s\\ 
 PDPS & $\x_{\text{initial}}^{2}$ & $0.58$ & $45$s\\ 
 PDPS &  $\x_{\text{initial}}^{3}$ & $0.12$ & $46$s \\ 
 PDPS & $\x_{\text{initial}}^{4}$ & $0.073$ & $40$s \\ 
 DNA-Inverse &  $ -- $ & $0.073$& $7s$ \\ 
 \hline
\end{tabular}
\caption{Comparison between objective value and execution time for different initiation points in the case of a noiseless signal $\mathrm{z}$ illustrated in Figure \ref{fig:simul_DNAInverse}. The convergence criterion for PDPS limits distance between two consecutive iterates: $\|\tau_{k} - \tau_{k+1}\| \leq 1\mathrm{e}{-5}$.}\label{tab:obj_values}
\end{table}

\begin{figure}[t]
\centering
\includegraphics[width = 0.95\linewidth]{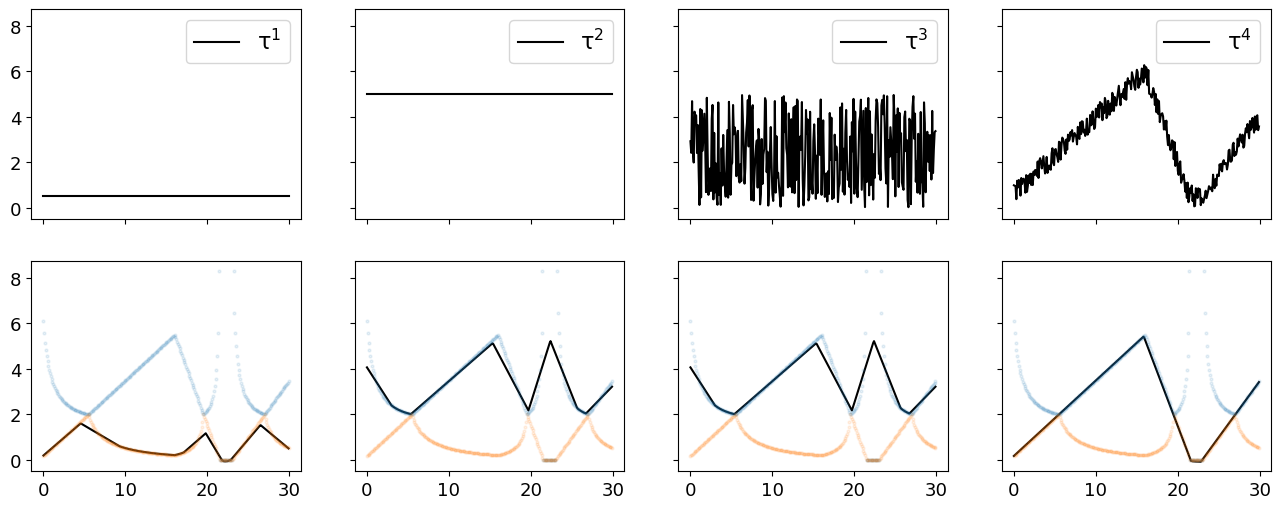}
\caption{Illustration of the results of PDPS algorithm for different initial points. (above): different values of $\x_{\text{initial}}^{i}$ for $i\in\{1,2,3,4\}.$ (below): solutions of the PDPS algorithm with the correspondent initial points. In orange we observe $\z^{\0} = \Psi_{\0}^{-1}(\z)$ and in blue $\z^{\1} = \Psi_{\1}^{-1}(\z).$}\label{Fig:comp_initial}
\end{figure}

\begin{table} 
\centering
\begin{tabular}{ |c|c| c|}
 \hline 
 Method & Mean execution time & Median execution time \\ 
 \hline
 DNA-Inverse & 42s & 10s\\ 
 PDPS & 272s & 163s  \\ 
  \hline
\end{tabular}
\caption{Comparison of execution time between DNA-Inverse and Adapted PDPS }\label{tab:time_comp}
\end{table}

\paragraph{Experiment with different initiation points:} When applying the PDPS method, we find local solutions. These solutions are not arbitrary, they depend on the inverses $\Psi_{\db}^{-1}$ defined in Section \ref{sec:numericalmethod}. More specifically, for an initial point $\x_{\text{initial}},$ consider $d_{\text{initial}}$ defined by:
$$\db_{\text{initial}} = \argmin_{\db \in \{0,1\}^{n}} \|\x_{\text{initial}} - \Psi_{\db}^{-1}(\z)\|_{2}^{2}.$$
Results exposed in Figure \ref{Fig:comp_initial} indicate that the PDPS algorithm results in a piecewise vector that approximates $\Psi_{\db_{\text{initial}}}^{-1}(\z).$ This fact suggests that we can adapt the initialization of DNA-Inverse, Algorithm \ref{alg:DNA-Inverse},  for the PDPS. The adapted algorithm loops as follows: Step 1 - For each $d \in \mathcal{\widetilde{D}},$ we set as initial point the vector $\Psi^{-1}_{d}(\mathbf{z})$; Step 2 -  Compare the objective values for each $d \in \mathcal{\widetilde{D}}$ and chose the smaller one. We call this strategy \textit{Adapted PDPS} and its output: $(\tau_{\text{PDPS}}^{*},d_{\text{PDPS}}^{*}).$  In Figure \ref{fig:comp_valk_dna_inverse} we observe the similarity between solutions for three different elements of the real data-set of yeast reads. For all these cases, the optimal $d_{\text{PDPS}}^{*} = d_{\text{DNA-Inverse}}^{*}$ and $\tau_{\text{PDPS}}^{*} \approx \tau_{\text{DNA-Inverse}}^{*}.$   

As illustrated in Figure \ref{fig:comp_valk_dna_inverse}, the \textit{Adapted PDPS} has shown to be efficient in providing a global solution to problem \eqref{eq:Problem1+l1}. Nevertheless, the main drawback of this adaptation is the execution time. The convergence of the proximal algorithm is slow, resulting in a significantly high total execution time, as displayed in the Figure \ref{Fig:comp_time} and in Table \ref{tab:time_comp}.

\begin{figure}[t]
\centering
\includegraphics[width = 0.95\linewidth]{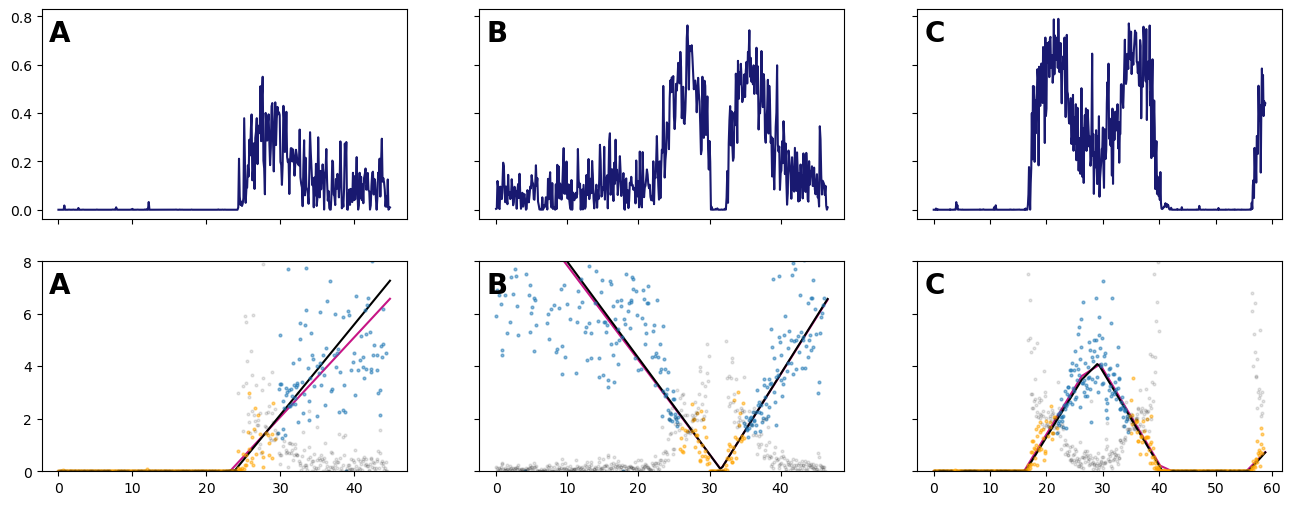}
\caption{(Top) Different reads in blue representing the following events: A. Progression of a fork to the right; B. Initiation; C. Termination. (Bottom) Solution $\x_{\text{DNA-Inverse}}^{*}$ (in pink) and $\x^{*}_{\text{PDPS}}$ (in black) for the different reads. Colored points are selected by the the solution $d^{*}$ that is the same for both methods. For $i \in \{1,...,n\},$ points in blue are the inverse $\psi_{0}^{-1}(\mathrm{z}_{i})$ where $d^{*}_{i} = 0,$ and points in orange are the inverse $\psi_{1}^{-1}(\mathrm{z}_{i})$ when $d^{*}_{i} = 1.$ Black points are those not selected for the solution $d^{*}.$ We emphasize the similarity of the time profiles for PDPS and DNA-Inverse, and the fact that the solutions in variable $\db$ are the same.}\label{fig:comp_valk_dna_inverse}
\end{figure}

\begin{figure}
\centering
\includegraphics[width = 0.6\linewidth]{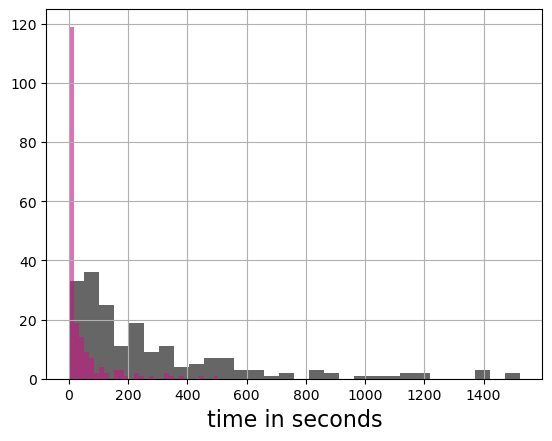}
\caption{Comparison between execution time distribution for DNA-Inverse (pink) and Adapted PDPS (black). The median execution time of DNA-Inverse is 16 times smaller than that of PDPS.}\label{Fig:comp_time}
\end{figure}

\subsection{Advantages of DNA-Inverse with respect to other methods:}\label{sec:advantages}

The primary advantage of the DNA-Inverse algorithm over previous works \cite{biology,LageGretsi,LageHybrid} is its ability to detect any replication event. Unlike previous methods, which only detected replication origins that had been activated before the beginning of the experiment, the DNA-Inverse algorithm is capable of detecting all replication events. In this context, various cases of interest, which are prevalent in the database, were previously excluded from the statistics, as illustrated in Figure \ref{fig:results}.

The effect of $\ell_{1}$ regularization in \eqref{eq:piecewise_linear} is to introduce a bias that tends to yield lower speed values because it affects the angular coefficient of lines \cite{Lasso_bias}. To mitigate this drawback, the DNA-Inverse method enables an enhancement of estimation at low cost. Note that after correctly detecting the variable $\db^*$, the problem  of finding the time profile $\x^{*}$ consists in fitting a piecewise linear function in a noisy data, a task for which various methods can be applied \cite{piecewise_fit_1,piecewise_3}. This possibility is promising for achieving velocity estimation with unprecedented accuracy in future studies.

\begin{figure}
\centering
\includegraphics[width = 0.95\linewidth]{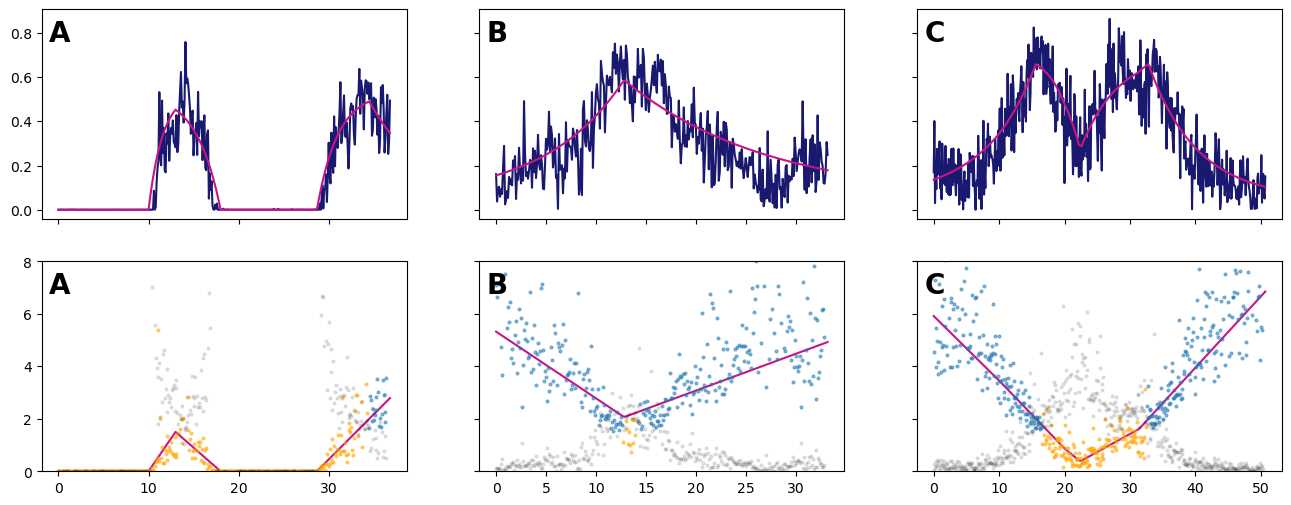}
\caption{(Top) Different reads in blue representing the following events: A. (left) Termination (right) fork progresses to the right; B. Initiation; C. Initiation. The line in pink represent the approximation $\Psi_{d^{*}}(\tau^{*})$ given by the optimal solution of the DNA-Inverse method. (Bottom) Solution $\x^{*}$ for DNA-Inverse (in pink). Colored points are selected by the the solution $d^{*}.$ For $i \in \{1,...,n\},$ points in blue are the inverse $\psi_{0}^{-1}(\mathrm{z}_{i})$ where $d^{*}_{i} = 0,$ and points in orange are the inverse $\psi_{1}^{-1}(\mathrm{z}_{i})$ when $d^{*}_{i} = 1.$ Black points are inverses not selected by the optimal $d^{*}.$  All these events could not be detected in previous works.}\label{fig:results}
\end{figure}

\section{Conclusion}
In this work, we analyzed DNA replication in single-molecule from the perspective of a nonlinear and non-convex inverse problem. We developed the model called DNA-Inverse, which effectively integrates different possibilities of local solutions, eliminating the necessity to solve the problem with multiple initial points. We studied theoretical results demonstrating the coherence of the proposed inverse problem and compare its results with state-of-the-art methods, highlighting its advantage in terms of execution time. Additionally, we showed how the results from this formulation outperform previous works, expanding the amount of data and replication events that can be analyzed by biologists.

\clearpage

\printbibliography

\end{document}